\definecolor{MyCyan}{HTML}{00F9DE}
\setlist{nolistsep,leftmargin=*}
\long\def\@savemarbox#1#2{\global\setbox#1\vtop{\hsize\marginparwidth 
  \@parboxrestore\tiny\raggedright #2}}
\renewcommand{\setminus}{{\smallsetminus}}
\newtheorem{theorem}{Theorem}[section]
\newtheorem*{theorem*}{Theorem}
\newtheorem{proposition}[theorem]{Proposition}
\newtheorem{lemma}[theorem]{Lemma}
\newtheorem{corollary}[theorem]{Corollary}
\newtheorem*{namedtheorem}{\theoremname}
\newcommand{\theoremname}{testing}
\theoremstyle{definition}
\newtheorem{definition}[theorem]{Definition}
\newcommand{\refthm}[1]{Theorem~\ref{Thm:#1}}
\newcommand{\reffig}[1]{Figure~\ref{Fig:#1}}
\title{Generalised T-links Represent All Links in the 3-Sphere}
\author{Thiago de Paiva}
\address[]{Beijing International Center for Mathematical Research, Peking University, Beijing 100871, China P.R.}
\email[]{thhiagodepaiva@gmail.com}
\author[Connie Hui]{Connie On Yu Hui} 
\address[]{Sydney Mathematical Research Institute (SMRI), The University of Sydney, NSW 2006, Australia } 
\email[]{connieonyuhui@gmail.com} 
\author{Jos\'{e} Andr\'{e}s Rodr\'{\i}guez Migueles}
\address[]{Centro de Investigaci\'on en Matem\'aticas, GTO 36023, Mexico}
\email[] {jose.migueles@cimat.mx} 
\begin{document} 

\begin{abstract}
We introduce generalised T-links, a restricted extension of the widely
studied families of T-links and twisted torus links, and prove that this
family is universal: every link in the \(3\)-sphere can be represented as
a generalised T-link.

This gives a compact braid-theoretic form of Ghrist's universality
theorem for certain Lorenz-like templates. More precisely, we replace the
template-theoretic representatives coming from universal Lorenz-like
templates with explicit braid closures depending on finitely many integer
parameters. Although closely related braid families have appeared
previously in the literature, the universality of this restricted class is
the main point of the present paper. Thus, all links in the \(3\)-sphere
are placed inside a natural structured class closely related to T-links,
Lorenz links, and twisted torus links.
\end{abstract}

\maketitle

\section{Introduction}

The study of links via braid diagrams is an important theme in knot theory. By Alexander's theorem, every link in \(S^3\) can be
represented as the closure of a braid. However, an arbitrary braid
representative usually carries very little structure, and many problems in
knot theory become more tractable only when the link is represented by a
braid belonging to a special family. Important examples include positive
braids, closed \(3\)-braids, torus links, twisted torus links, Lorenz
links, and T-links. These families have been studied from several
perspectives, including braid index
\cite{de2024lorenz, Franks-Williams:BraidsAndJonesPolynomial, FellerHubbard}, knot Floer homology
\cite{homology}, geometric classification
\cite{Composite, lee2018satellite, Anote, tangle, dePaiva2025,
de2022torus, dePaivaPurcell2024, twofulltwists, de2022hyperbolic},
hyperbolic volume \cite{generalizedtwistedtoruslinks, nextsimplest,
simplest}, essential surfaces \cite{MR2788641}, Heegaard splittings
\cite{Heegaard}, and bridge spectra \cite{Bridge}.

Let \(B_p\) denote the braid group on \(p\) strands, generated by the
standard Artin generators \(\sigma_1,\dots,\sigma_{p-1}\). For integers
\(2\leq r_1<\cdots<r_k\) and \(s_1,\dots,s_k>0\), the T-link
\[
T((r_1,s_1),\dots,(r_k,s_k))
\]
is defined to be the closure of the positive braid
\[
(\sigma_1\sigma_2\cdots\sigma_{r_1-1})^{s_1}
(\sigma_1\sigma_2\cdots\sigma_{r_2-1})^{s_2}
\cdots
(\sigma_1\sigma_2\cdots\sigma_{r_k-1})^{s_k}
\in B_{r_k}.
\]

Among these families, Lorenz links and T-links occupy a particularly
interesting position because they connect low-dimensional topology with
dynamics. Lorenz links arise as periodic orbits of the Lorenz flow, or,
equivalently, as links embedded in the Lorenz template, a branched surface
equipped with a semiflow that models the dynamics of the Lorenz attractor.
Birman and Williams introduced templates as a general framework for
studying knots and links arising as periodic orbits of flows in
\(3\)-manifolds. In this setting, Lorenz links provide one of the clearest
bridges between symbolic dynamics, braid theory, and knot theory.

A theorem of Birman and Kofman shows that a broader definition of Lorenz links, which includes any parallel push-offs on the
Lorenz template of any sublinks of the previous notion of Lorenz links, admit a remarkably
simple braid-theoretic description: they are precisely T-links
\cite{Birman-Kofman:NewTwistOnLorenzLinks}. Thus, a class of links
originally defined from dynamics can be described by explicit positive
braid closures depending on finitely many integer parameters. This
correspondence makes braid-theoretic methods available in the study of
Lorenz links. It replaces a template-theoretic description with explicit
braid words and places Lorenz links inside a structured class that also
contains torus links and is closely related to twisted torus links.

Twisted torus links provide another natural enlargement of the class of
torus links. Twisted torus knots were introduced by Dean in his doctoral
thesis in connection with the study of Dehn surgeries producing
Seifert-fibered spaces \cite{Thesis}. They are obtained from a torus link
\(T(p,q)\) by adding \(s\) full twists on \(r\) adjacent strands. Here the
twisting parameter \(s\) is allowed to be positive or negative. In
particular, when \(r\leq p\), the twisted torus link \(T(p,q;r,s)\) is
represented by the braid
\[
(\sigma_1\sigma_2\cdots\sigma_{r-1})^{sr}
(\sigma_1\sigma_2\cdots\sigma_{p-1})^{q}.
\]
This family appears in several of the works cited above, where it is
studied from different perspectives, including geometric classification,
hyperbolic volume, essential surfaces, and knot Floer homology; see, for
example, \cite{unexpected, LeeThiago, de2022hyperbolic} for more
information.

From the braid-theoretic point of view, twisted torus links are closely
related to T-links, but the two families have different kinds of
flexibility. A twisted torus link has two twisting regions: the original
torus-link region and the additional twisting region on \(r\) adjacent
strands, where the additional twisting may be positive or negative. By
contrast, T-links are closures of positive braids, but they allow
arbitrarily many positive twisting regions. Generalised T-links combine
these two features in a restricted way: they allow several positive
twisting regions, as in T-links, together with one final twisting parameter
that may be negative , as in twisted torus links.

Torus links occur as basic examples both among T-links and among twisted
torus links. By the Birman--Kofman theorem, Lorenz links coincide with
T-links. Twisted torus links form a different enlargement of torus links:
they allow one additional twisting region, possibly with negative twisting.

Neither T-links nor twisted torus links form a universal family. Indeed,
twisted torus knots have very restricted behaviour when they are
composite: Lee classified the composite twisted torus knots and showed
that, in that case, they are connected sums of two torus knots
\cite{lee2018satellite}. Thus, for example, composite knots with more
than two prime factors cannot all be realised as twisted torus knots.
Similarly, T-links, equivalently Lorenz links, do not contain all satellite
knots. In particular, there are infinitely many satellite knots whose
companions and patterns are Lorenz knots but which are not themselves
Lorenz knots \cite{dePaiva2025}. Therefore, although T-links, Lorenz
links, and twisted torus links are natural and widely studied families,
they do not contain all links in \(S^3\).

This motivates the following common generalisation of T-links and twisted
torus links. Let
\(2\leq r_1<\cdots<r_k<r_{k+1}\), let
\(s_1,\dots,s_k>0\), let \(\vert s_{k+1}\vert \geq 2\), and let \(d\geq 0\). We define
the \textit{generalised T-link}
\[
T((r_1,s_1),\dots,(r_k,s_k),(r_{k+1},s_{k+1}),d)
\]
to be the closure of the braid
\[
(\sigma_{d+1}\cdots\sigma_{d+r_1-1})^{s_1}
\cdots
(\sigma_{d+1}\cdots\sigma_{d+r_k-1})^{s_k}
(\sigma_{d+1}\cdots\sigma_{d+r_{k+1}-1})^{s_{k+1}}.
\]
Here the final twisting parameter \(s_{k+1}\) is allowed to be negative,
while all preceding twisting parameters are positive.

Our main result is that this restricted generalisation of T-links and
twisted torus links is universal.

\begin{theorem}\label{Thm:AllLinksGeneralisedTLinks}
Every link in \(S^3\) can be represented as a generalised T-link.
\end{theorem}

Some variations of generalised T-links have appeared previously in the
knot theory literature, although their universality does not seem to have
been observed. For instance, closely related families arise in the study
of volume bounds for generalized twisted torus links
\cite{generalizedtwistedtoruslinks} and in work relating braid orderings
to the geometry of closed braids \cite{MR2788641}. In those settings, one
usually allows more general patterns of twisting. The point of the present
paper is that a much more restricted family is already universal: it is
enough to allow several positive twisting regions, together with one final
twisting parameter that may be negative and an offset in the braid
support.

Theorem~\ref{Thm:AllLinksGeneralisedTLinks} can be viewed as a
braid-theoretic refinement of Ghrist's universality theorem for
Lorenz-like templates. The notions of template and universal template are
recalled in Section~\ref{Sec:Templates}. Ghrist proved that certain
Lorenz-like templates are universal, in the sense that they contain
representatives of all links in \(S^3\), up to ambient isotopy. The
contribution here is to convert these template representatives into
explicit braid closures belonging to a natural family closely related to
T-links, Lorenz links, and twisted torus links. 

This conversion does more than providing another finite description of the
same links. It replaces template representatives by braid words built from
a small number of repeated blocks in the standard Artin generators. In
many cases, the isotopies and destabilizations used in the conversion also
lead to braid representatives with fewer crossings. Thus, the
generalised T-link form gives a concrete braid-theoretic parametrisation
of the links arising from universal Lorenz-like templates.

This is useful because generalised T-links naturally enlarge families that
have already been studied extensively. Several subfamilies are already
covered by results on braid index
\cite{de2024lorenz, Franks-Williams:BraidsAndJonesPolynomial, FellerHubbard},
knot Floer homology \cite{homology}, geometric classification
\cite{Composite, lee2018satellite, Anote, tangle, dePaiva2025,
de2022torus, dePaivaPurcell2024, twofulltwists, de2022hyperbolic},
hyperbolic volume \cite{generalizedtwistedtoruslinks, nextsimplest,
simplest}, essential surfaces \cite{MR2788641}, Heegaard splittings
\cite{Heegaard}, and bridge spectra \cite{Bridge}. In this way, the
generalised T-link description places a universal family of links inside a
framework where a substantial amount of braid-theoretic, geometric, and
topological machinery is already available.

This braid-theoretic viewpoint also makes the non-uniqueness of universal
template representations more explicit. A fixed link may be represented
in many different ways inside universal Lorenz-like templates, and the
conversion above turns such representatives into explicit generalised
T-link braid words. This does not solve the uniqueness problem for
generalised T-link parameters: a single link may admit several, and in
some cases infinitely many, generalised T-link descriptions. Rather, it
provides a braid-theoretic setting in which this redundancy can be studied.

This phenomenon is already visible for ordinary T-links. We prove that
every T-link admits infinitely many representatives in each universal even
Lorenz-like template \(\mathscr L(0,2n)\), with \(n<0\),
Theorem~\ref{Thm:InfiniteTripNumber}. Thus, even within the classical
class of T-links, universal Lorenz-like templates contain substantial
redundancy.

The proof of \refthm{AllLinksGeneralisedTLinks} proceeds in two steps. First, we show that the ambient isotopy
classes of links carried by the Lorenz-like template
\(\mathscr L(0,2v)\), with \(v\leq 0\), are precisely the ambient isotopy
classes represented by generalised T-links satisfying
$s_{k+1}\geq v r_{k+1}.$
In particular, this extends the Birman--Kofman correspondence between
Lorenz links and T-links from the Lorenz template to a family of
Lorenz-like templates. Second, when \(v<0\), Ghrist's theorem implies that
\(\mathscr L(0,2v)\) is universal. Combining these two facts proves
Theorem~\ref{Thm:AllLinksGeneralisedTLinks}.

The paper is organized as follows. In Section~\ref{Sec:Templates}, we
recall the required background on Lorenz templates, Lorenz-like templates,
and universal templates. In Section~\ref{ANewTwist}, we prove the
conversion from Lorenz-like template representatives to generalised
T-link representatives and deduce that every link in \(S^3\) is a
generalised T-link. In Section~\ref{Sec:FinalRemarks}, we discuss some
questions arising from this representation theorem and indicate directions
for further work.

\section{Lorenz-like templates and trip number}
\label{Sec:Templates}

A \emph{template} is a compact branched \(2\)-manifold with nonempty
boundary, equipped with a semiflow locally modelled on joining and
splitting charts; see
\cite[Figure~2.4]{Ghrist-Holmes-Sullivan:KnotsALinksInThreeDimFlows}.
A fundamental example is the \emph{Lorenz template}
(see Figure~\ref{Fig:LorenzTemplate}, left). It was introduced by
Guckenheimer and Williams
\cite{Guckenheimer-Williams:StructuralStabilityOfLorenzAttractors,
Williams:StructureOfLorenzAttractors}
as a geometric model for the Lorenz flow, and Tucker
\cite{Tucker:Smale14thProblem} later justified this model. We regard a
\emph{Lorenz link} as a link embedded in the Lorenz template; equivalently,
Lorenz links are the links represented by periodic orbits carried by this
template.  

\begin{definition}\label{Def:LorenzLikeTemplate}
Let \(u\) and \(v\) be integers. The \emph{\((u,v)\)-Lorenz-like template}
\(\mathscr L(u,v)\) is obtained by cutting along
the branch line of the Lorenz template, regluing the left strip with \(u\) half twists, and
regluing the right strip with \(v\) half twists. Positive and negative
half twists are distinguished by the sign of the crossings.

A link embedded in \(\mathscr L(u,v)\) is called a
\emph{\((u,v)\)-Lorenz-like link}, or simply a \emph{Lorenz-like link}.
\end{definition}

\begin{figure}
%% Creator: Inkscape 1.1.2 (b8e25be833, 2022-02-05), www.inkscape.org
%% PDF/EPS/PS + LaTeX output extension by Johan Engelen, 2010
%% Accompanies image file 'LorenzLikeTemplate.pdf' (pdf, eps, ps)
%%
%% To include the image in your LaTeX document, write
%%   \input{<filename>.pdf_tex}
%%  instead of
%%   \includegraphics{<filename>.pdf}
%% To scale the image, write
%%   \def\svgwidth{<desired width>}
%%   \input{<filename>.pdf_tex}
%%  instead of
%%   \includegraphics[width=<desired width>]{<filename>.pdf}
%%
%% Images with a different path to the parent latex file can
%% be accessed with the `import' package (which may need to be
%% installed) using
%%   \usepackage{import}
%% in the preamble, and then including the image with
%%   \import{<path to file>}{<filename>.pdf_tex}
%% Alternatively, one can specify
%%   \graphicspath{{<path to file>/}}
%% 
%% For more information, please see info/svg-inkscape on CTAN:
%%   http://tug.ctan.org/tex-archive/info/svg-inkscape
%%
\begingroup%
  \makeatletter%
  \providecommand\color[2][]{%
    \errmessage{(Inkscape) Color is used for the text in Inkscape, but the package 'color.sty' is not loaded}%
    \renewcommand\color[2][]{}%
  }%
  \providecommand\transparent[1]{%
    \errmessage{(Inkscape) Transparency is used (non-zero) for the text in Inkscape, but the package 'transparent.sty' is not loaded}%
    \renewcommand\transparent[1]{}%
  }%
  \providecommand\rotatebox[2]{#2}%
  \newcommand*\fsize{\dimexpr\f@size pt\relax}%
  \newcommand*\lineheight[1]{\fontsize{\fsize}{#1\fsize}\selectfont}%
  \ifx\svgwidth\undefined%
    \setlength{\unitlength}{144.61663982bp}%
    \ifx\svgscale\undefined%
      \relax%
    \else%
      \setlength{\unitlength}{\unitlength * \real{\svgscale}}%
    \fi%
  \else%
    \setlength{\unitlength}{\svgwidth}%
  \fi%
  \global\let\svgwidth\undefined%
  \global\let\svgscale\undefined%
  \makeatother%
  \begin{picture}(1,0.7315248)%
    \lineheight{1}%
    \setlength\tabcolsep{0pt}%
    \put(0,0){\includegraphics[width=\unitlength,page=1]{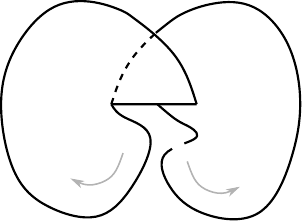}}%
    \put(0.3075093,0.06912099){\color[rgb]{0.6,0.6,0.6}\makebox(0,0)[lt]{\lineheight{1.25}\smash{\begin{tabular}[t]{l}$x$\end{tabular}}}}%
    \put(0.67403551,0.04283614){\color[rgb]{0.6,0.6,0.6}\makebox(0,0)[lt]{\lineheight{1.25}\smash{\begin{tabular}[t]{l}$y$\end{tabular}}}}%
    \put(0,0){\includegraphics[width=\unitlength,page=2]{LorenzLikeTemplate.pdf}}%
  \end{picture}%
\endgroup%

\caption{The Lorenz-like template \(\mathscr L(1,2)\).}
\label{Fig:LorenzLikeTemplate}
\end{figure}

The templates \(\mathscr L(u,v)\) and \(\mathscr L(v,u)\) carry the same
family of links, up to ambient isotopy.

\begin{definition}\label{Def:TripNumber}
Let \(L\) be a link embedded in a Lorenz-like template. 
%The \emph{trip number} of \(L\), denoted by \(\tau(L)\), is the number of strands of \(L\) which, after the template is cut open along its branch line, travel from the left strip to the right strip. 
The
\emph{trip number} of \(L\), denoted by \(\tau(L)\), is the number of strands in the left strip of the braid Lorenz template (see Figure~\ref{Fig:LorenzTemplate}, right) that intersect the part of the branch line which connects the right strip. 
\end{definition}

%For links in the Lorenz template \(\mathscr L(0,0)\), this agrees with
%the trip number introduced by Birman and Williams. Franks and Williams
%proved that the trip number of a Lorenz link is equal to its braid index.
%If \(L\) is a Lorenz-like link in \(\mathscr L(0,n)\), then \(L\) is
%obtained from an associated Lorenz link by inserting \(n\) half twists in
%the right strip. These half twists do not change the trip number.

\begin{definition}\label{Def:UniversalTemplate}
A template \(\mathcal T\) embedded in \(S^3\) is called
\emph{universal} if every link in \(S^3\) is ambient isotopic to a link
embedded in \(\mathcal T\). Equivalently, for every ambient isotopy class
of links in \(S^3\), the template \(\mathcal T\) carries at least one
representative of that class.
\end{definition}

\begin{proposition}[
{\cite{Ghrist:BranchedTwoMfdsSupportingAllLinks}} and
{\cite[Proposition~3.2.16]{Ghrist-Holmes-Sullivan:KnotsALinksInThreeDimFlows}}
]\label{Prop:UniversalTemplates}
If \(v<0\), then the Lorenz-like template \(\mathscr L(0,v)\) is universal.
\end{proposition}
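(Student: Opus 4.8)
This proposition is due to Ghrist; I sketch the strategy one would follow to prove it. The plan is to show that $\mathscr{L}(0,v)$ carries the closure of an arbitrary braid. Since, by Alexander's theorem, every link in $\mathbb{S}^3$ is the closure of a braid, this suffices for universality. The whole argument is organised around the notion of a \emph{subtemplate}: if a sub-branched-surface $T'$ of a template $T$ inherits (a reparametrisation of) the semiflow of $T$, then every link carried by $T'$ is carried by $T$; in particular $T$ is universal as soon as it contains a universal subtemplate. So it is enough to locate inside $\mathscr{L}(0,v)$ a subtemplate that carries every link.

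First I would construct, for each $n\geq 2$, a ``braiding'' subtemplate $\mathcal{B}_n$ that carries the closure $\widehat{\beta}$ of every $\beta\in B_n$. Here one uses that $B_n$ is generated by $\sigma_1$ together with the cyclic element $\rho=\sigma_1\sigma_2\cdots\sigma_{n-1}$, since $\rho\,\sigma_i\,\rho^{-1}=\sigma_{i+1}$ for $1\leq i\leq n-2$. Consequently it is enough that $\mathcal{B}_n$ carry $n$ strands recirculating around a core past a single ``gate'' in the flow that is able to apply $\rho$, $\sigma_1$, and $\sigma_1^{-1}$: sending a strand through the gate repeatedly then realises an arbitrary word in $\rho^{\pm 1}$ and $\sigma_1^{\pm 1}$, i.e.\ an arbitrary braid $\beta$, and the recirculation closes it up to $\widehat\beta$. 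The positive crossings for $\rho$ and for $\sigma_1$ come from the branching of the template along its branch line, exactly as in the ordinary Lorenz template; the negative crossing $\sigma_1^{-1}$ is produced by detouring the relevant pair of strands once through the right-hand strip, which carries negative half-twists because $v<0$. This is precisely where the hypothesis $v<0$ enters: a negative twist on one strip is what makes crossings of both signs available.

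Next I would exhibit an explicit embedding $\mathcal{B}_n\hookrightarrow\mathscr{L}(0,v)$ as a subtemplate: the $n$ recirculating strands are cabled along the untwisted left strip, the $\rho$-gate is placed at the branch line, and the pair of strands requiring a negative crossing is detoured through the right strip. One then verifies that these choices can be made simultaneously -- that the strands fit on the two strips of $\mathscr{L}(0,v)$ without forced intersections, that the branch-line identifications are respected, and that the semiflow of $\mathscr{L}(0,v)$ restricts to $\mathcal{B}_n$. Given then an arbitrary link $L=\widehat\beta$ with $\beta\in B_n$, it is carried by $\mathcal{B}_n\subset\mathscr{L}(0,v)$, hence by $\mathscr{L}(0,v)$; since $L$ was arbitrary, $\mathscr{L}(0,v)$ is universal.

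The main obstacle is this explicit construction together with the verification that $\mathcal{B}_n$ is an honest subtemplate: one must track the over/under information so that exactly $\sigma_1^{+1}$ and $\sigma_1^{-1}$ (and not their mirror images) become available at the gate, and confirm that feeding the gate's output back around the left strip does not obstruct subsequent passes through the gate. This bookkeeping -- essentially all of the geometric content -- is the delicate part; once the embedded braiding subtemplate is in hand, universality of $\mathscr{L}(0,v)$ follows formally.
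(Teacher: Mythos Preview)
The paper does not supply its own proof of this proposition: it is stated with a citation to Ghrist's original paper and to the Ghrist--Holmes--Sullivan book, and is used as a black box thereafter. So there is no ``paper's proof'' to compare against; the authors simply import the result.

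Your sketch is a fair outline of the spirit of Ghrist's argument---the subtemplate principle, the need for a negative half-twist on one strip to make crossings of both signs available, and the reduction to carrying arbitrary braid closures are all genuine ingredients. That said, what you have written is explicitly a strategy rather than a proof: the entire weight lies in the step you flag as ``the main obstacle,'' namely producing an honest embedded subtemplate $\mathcal{B}_n\hookrightarrow\mathscr{L}(0,v)$ and checking the over/under data. Ghrist's actual construction is more concrete than your description suggests (he builds a specific universal template $\mathcal{V}$ and exhibits it as a subtemplate of $\mathscr{L}(0,-1)$ via explicit symbolic/kneading-sequence bookkeeping, then uses subtemplate inclusions to pass to general $v<0$), and the verification is genuinely intricate. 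If you intend this as more than a citation-with-commentary, you would need to either reproduce that construction or give an alternative explicit $\mathcal{B}_n$; as written, the proposal is a correct high-level summary but not a self-contained argument.
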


%In particular, every link in \(S^3\) can be embedded in the Lorenz-like
%template \(\mathscr L(0,-2)\).

\section{A braid simplification of Lorenz-like links}
\label{ANewTwist} 

In this section we prove the braid-theoretic conversion underlying the
main theorem. The starting point is the Birman--Kofman correspondence,
which identifies Lorenz links with T-links
\cite{Birman-Kofman:NewTwistOnLorenzLinks}. We extend this correspondence
from the Lorenz template \(\mathscr L(0,0)\) to the Lorenz-like templates
\(\mathscr L(0,2v)\). More precisely, we show that, for each \(v\leq 0\), the ambient isotopy classes of links carried by \(\mathscr L(0,2v)\) are
precisely the ambient isotopy classes represented by generalised T-links
satisfying a simple inequality on the final twisting parameter.

Although this result is inspired by the Birman--Kofman theorem, the
conversion used here is different. The additional twists in the
Lorenz-like templates must be separated from the positive part of the
Lorenz braid and then absorbed into the final twisting block of a
generalised T-link. This is the mechanism that turns a Lorenz-like
template representative into a compact braid word of generalised T-link
type.

When \(v<0\), Ghrist's theorem implies that the template
\(\mathscr L(0,2v)\) is universal. Thus the conversion theorem immediately
implies that every link in \(S^3\) can be represented as a generalised
T-link. We also record a related non-uniqueness phenomenon: every ordinary
T-link has infinitely many representatives in each universal
Lorenz-like template of the form \(\mathscr L(0,2n)\), with \(n<0\).

\begin{figure}
% left:
%% Creator: Inkscape 1.1.2 (b8e25be833, 2022-02-05), www.inkscape.org
%% PDF/EPS/PS + LaTeX output extension by Johan Engelen, 2010
%% Accompanies image file 'LorenzTemplate.pdf' (pdf, eps, ps)
%%
%% To include the image in your LaTeX document, write
%%   \input{<filename>.pdf_tex}
%%  instead of
%%   \includegraphics{<filename>.pdf}
%% To scale the image, write
%%   \def\svgwidth{<desired width>}
%%   \input{<filename>.pdf_tex}
%%  instead of
%%   \includegraphics[width=<desired width>]{<filename>.pdf}
%%
%% Images with a different path to the parent latex file can
%% be accessed with the `import' package (which may need to be
%% installed) using
%%   \usepackage{import}
%% in the preamble, and then including the image with
%%   \import{<path to file>}{<filename>.pdf_tex}
%% Alternatively, one can specify
%%   \graphicspath{{<path to file>/}}
%% 
%% For more information, please see info/svg-inkscape on CTAN:
%%   http://tug.ctan.org/tex-archive/info/svg-inkscape
%%
\begingroup%
  \makeatletter%
  \providecommand\color[2][]{%
    \errmessage{(Inkscape) Color is used for the text in Inkscape, but the package 'color.sty' is not loaded}%
    \renewcommand\color[2][]{}%
  }%
  \providecommand\transparent[1]{%
    \errmessage{(Inkscape) Transparency is used (non-zero) for the text in Inkscape, but the package 'transparent.sty' is not loaded}%
    \renewcommand\transparent[1]{}%
  }%
  \providecommand\rotatebox[2]{#2}%
  \newcommand*\fsize{\dimexpr\f@size pt\relax}%
  \newcommand*\lineheight[1]{\fontsize{\fsize}{#1\fsize}\selectfont}%
  \ifx\svgwidth\undefined%
    \setlength{\unitlength}{141.44883994bp}%
    \ifx\svgscale\undefined%
      \relax%
    \else%
      \setlength{\unitlength}{\unitlength * \real{\svgscale}}%
    \fi%
  \else%
    \setlength{\unitlength}{\svgwidth}%
  \fi%
  \global\let\svgwidth\undefined%
  \global\let\svgscale\undefined%
  \makeatother%
  \begin{picture}(1,0.46887706)%
    \lineheight{1}%
    \setlength\tabcolsep{0pt}%
    \put(0,0){\includegraphics[width=\unitlength,page=1]{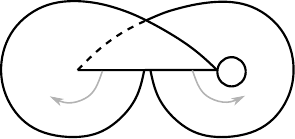}}%
    \put(0.24443091,0.07011143){\color[rgb]{0.6,0.6,0.6}\makebox(0,0)[lt]{\lineheight{1.25}\smash{\begin{tabular}[t]{l}$x$\end{tabular}}}}%
    \put(0.68548454,0.07368241){\color[rgb]{0.6,0.6,0.6}\makebox(0,0)[lt]{\lineheight{1.25}\smash{\begin{tabular}[t]{l}$y$\end{tabular}}}}%
    \put(0,0){\includegraphics[width=\unitlength,page=2]{LorenzTemplate.pdf}}%
  \end{picture}%
\endgroup%

\hspace{16mm} 
% right: 
\import{figures/}{BraidLorenzTemplate.pdf_tex}

\caption{Left: Lorenz template. Right: Braid Lorenz template.
\label{Fig:LorenzTemplate}}
\end{figure}

The Lorenz template \( \mathscr{L}(0, 0)\) produces a braid template for the Lorenz links. This is achieved by cutting and opening the Lorenz template, as illustrated in Figure~\ref{Fig:LorenzTemplate}.

A Lorenz braid in the braid template is formed by groups of overcrossing and undercrossing strands, with the overcrossing strands located among the leftmost strands and the undercrossing strands among the rightmost strands at the top of the Lorenz braid. Furthermore, the overcrossing strands never cross each other, and the undercrossing strands never cross each other. See \reffig{LorenzBraid} for example. 
% A Lorenz braid is characterized by the permutation associated with its strands. \marginconnie{For the last sentence here, permutation of what? Is it referring to the full bunch order in the bunch algorithm? Shall we either remove this sentence and add ``Refer to \reffig{LorenzBraid}''; or, clearify and add a reference for this statement?\vspace{10pt}\\ I just saw the paragraph after the next one that explains ``permutation''. I think ``positions of the overcrossing strand bottom end points'' is not a permutation?  I will try to adjust the wording there and come back to this later.}
 
If a Lorenz braid has no overcrossing strands, or equivalently, if the Lorenz link has a trip number equal to zero, then the Lorenz link is the unlink with trivial components that are parallel to the two boundary components of the Lorenz template. Therefore, we consider that the Lorenz braid has at least one overcrossing strand. Furthermore, given a link \( L \) in the Lorenz template, we can obtain infinitely many splittable links by adding trivial loops that are parallel to the left boundary component of the template \( \mathscr{L}(0, 0) \). For now, when we consider a link in the Lorenz template, we disregard these trivial loops that are parallel to the left boundary component.

A Lorenz braid can be defined by determining the positions of the bottom end points of all the overcrossing strands. This is well-defined because once the positions of the overcrossing strands are determined, there is only one way to place the undercrossing strands since they do not cross one another.
% A Lorenz braid is well-defined by the permutation associated with the overcrossing strands because once the positions of the overcrossing strands are set, there is only one way to place the undercrossing strands since they do not cross. 

Let $p$ be the number of overcrossing strands. Label the top end points of all strands from left to right by $1, 2, 3, \ldots, p, \ldots$. (See \reffig{LorenzBraid}.) Vertically below each top label, mark the same number as a bottom label. For any integer $i$ from $1$ to $p$, let $r_i$ be a positive integer such that an overcrossing strand with top end point located at Top Label $i$ will have the bottom end point located at Bottom Label $(i+r_i)$. Since overcrossing strands never intersect, we obtain a non-decreasing sequence of natural numbers:
%The overcrossing strands are inclined from the left to the right. If an overcrossing strand begins at \( i \), then it ends at \( (i + r_i) \) with \( r_i > 0 \). This provides the following sequence of natural numbers:
\[
1 \leq r_1 \leq r_2 \leq \dots \leq r_{p-1} \leq r_p.
\]
We can assume that \( r_1 > 1 \) because if \( r_1 = 1 \), we can apply a destabilization move on the left of this Lorenz braid of \( L \) to obtain a new Lorenz braid for \( L \) with \( p - 1 \) overcrossing strands. Hence, we can assume that \( r_1 \geq 2 \).
Furthermore, since it is possible that \( r_i = r_{i+1} \), if we collect the strands of the same slope together, we can rewrite this sequence of natural numbers as the following vector, called a \textit{Lorenz vector},
\[
\vec{d}_{L} = \langle d_1^{s_1}, \dots, d_k^{s_k} \rangle,
\]
where \( 2 \leq d_1 < \dots < d_k \), \(\{ r_1, r_2, \dots, r_p \} = \{ d_1, \dots, d_k \}\), and \( s_1 + \dots + s_k = p \).

\begin{figure}
\import{figures/}{InSplitTemplate.pdf_tex}
\caption{The Lorenz braid of the link $L$ represented by the Lorenz vector $\vec{d}_{L} = \langle 2^{2}, 4^{2}, 7^{3}, 8^{2}\rangle$.
\label{Fig:LorenzBraid}} 
\end{figure}  

Let $\vec{d}_{L} = \langle d_1^{s_1}, \dots, d_k^{s_k}\rangle$ be a Lorenz vector representing the Lorenz braid shown in Figure~\ref{Fig:LorenzBraid}. 
Denote by $(d_1, s_1)$ the braid 
$$(\sigma_{s_1}\dots \sigma_{d_1+s_1-1})\dots (\sigma_{2}\dots \sigma_{d_1+1})(\sigma_{1}\dots \sigma_{d_1}).$$
% $$(\sigma_{1}\dots \sigma_{d_1})(\sigma_{2}\dots \sigma_{d_1+1})\dots (\sigma_{s_1}\dots \sigma_{d_1+s_1-1}).$$

For integer $i>1$, denote by $(d_i, s_i)$ the braid
$$(\sigma_{s_1+\dots + s_{i-1}+ s_i}\dots \sigma_{s_1+\dots + s_{i-1}+s_i+d_i-1})\dots(\sigma_{s_1+\dots + s_{i-1}+1}\dots \sigma_{s_1+\dots + s_{i-1}+d_i}).$$ 
% $$(\sigma_{s_1+\dots + s_{i-1}+1}\dots \sigma_{s_1+\dots + s_{i-1}+d_i}) \dots 
% (\sigma_{s_1+\dots + s_{i-1}+ s_i}\dots \sigma_{s_1+\dots + s_{i-1}+s_i+d_i-1}).$$

The Lorenz braid associated with the Lorenz vector $\vec{d}_{L} = \langle d_1^{s_1}, \dots, d_k^{s_k}\rangle$ is the following braid
$$(d_k, s_k)(d_{k-1}, s_{k-1}) \dots (d_2, s_2)(d_1, s_1),$$
which has $s_1+\dots + s_k + d_k$ strands.

Let $a\geq b>1$. We denote by $\Delta_{b, a}$ the braid
$$(\sigma_{a-b+1}\sigma_{a-b+2}\dots \sigma_{a - 1})(\sigma_{a-b+1}\sigma_{a-b+2}\dots \sigma_{a - 2})\dots (\sigma_{a-b+1}\sigma_{a-b+2})(\sigma_{a-b+1}).$$
In other words, $\Delta_{b, a}$ denotes a positive half twist on the last $b$ strands of the trivial braid with $a$ strands.
On the other hand, we denote by $\Delta_{b, a}^{-1}$ a negative half twist on the last $b$ strands of the trivial braid with $a$ strands, as shown in the following braid
$$(\sigma_{a - 1}^{-1}\sigma_{a - 2}^{-1} \dots \sigma_{a-b+2}^{-1}\sigma_{a-b+1}^{-1})(\sigma_{a - 1}^{-1}\sigma_{a - 2}^{-1} \dots \sigma_{a-b+2}^{-1})\dots (\sigma_{a - 1}^{-1}\sigma_{a - 2}^{-1})(\sigma_{a - 1}^{-1}).$$ 

Every link \( L \) in the Lorenz-like template \( \mathscr{L}(0,n) \) is obtained from a Lorenz link \( L' \), called the \textit{Lorenz link associated with \( L \)}, by applying \( n \) half twists on the strands of \( L' \) in the right strip of the Lorenz template \( \mathscr{L}(0,0) \). If the Lorenz braid of \( L' \) has no overcrossing strands, or equivalently, if the Lorenz link \( L' \) has a trip number equal to zero, then \( L \) is a collection of trivial loops that are parallel to the two boundary components of the template with \( n \) half twists applied to some of them.

Furthermore, given a link \( L \) in the Lorenz-like template \( \mathscr{L}(0,n) \), we can obtain infinitely many splittable  links by adding trivial loops that are parallel to the left boundary component of the template \( \mathscr{L}(0,n) \). From now on, unless otherwise specified, when we consider a link in the Lorenz-like template \( \mathscr{L}(0,n) \), we disregard the trivial loops that are parallel to the left boundary component.

Moreover, if the Lorenz braid of \( L' \) has only one overcrossing strand, or equivalently, if the Lorenz link \( L' \) has a trip number equal to one, and there is only one strand in the right strip of the template \( \mathscr{L}(0,0) \), then \( L \) and \( L' \) are both the same unlink. We consider then that there is at least one overcrossing strand and at least two strands in the right strip of the template \( \mathscr{L}(0,0) \) in the Lorenz braid of \( L' \).

Let \(\vec{d}_{L'} = \langle d_1^{s_1}, \dots, d_k^{s_k} \rangle\) be the Lorenz vector of \(L'\), called the \textit{Lorenz vector associated with the link \(L\)}. Then, there exists a natural number \(d \geq 0\) such that \(L\) is the closure of the following braid with \(s_1 + \dots + s_k + d_k + d\) strands
\[
(d_k, s_k)(d_{k-1}, s_{k-1}) \dots (d_2, s_2)(d_1, s_1) \Delta^n_{d_k + d, s_1 + \dots + s_k + d_k + d.}
\]
The number \(d\) can be greater than zero if we add trivial loops that are parallel to the boundary component of the strip of the Lorenz template that is twisted to obtain the Lorenz-like template \(\mathscr{L}(0,n)\).  

%We observe that the trip number of \(L\) in the Lorenz-like template \(\mathscr{L}(0,n)\) is equal to the trip number of \(L'\) in the Lorenz template \(\mathscr{L}(0,0)\) because the \(n\) half twists applied along the strands of \(L'\) in the right strip of \(\mathscr{L}(0,n)\) do not change the trip number.

\begin{proposition}\label{Proposition1}
Let $\vec{d}_{L'} = \langle d_1^{s_1}, \dots, d_k^{s_k}\rangle$ be a Lorenz vector with $k>1$ or if $k = 1$ then $s_1>1$. 
Consider $\beta =  \sigma_{i_1}\dots \sigma_{i_a}$ a braid with at least $s_1+\dots + s_k + d_k$ strands and  no crossings $\sigma_{i_j}$ with $i_j\leq s_1+\dots + s_k$. Then, there is an ambient isotopy that takes the closure of the braid $$B'=(d_k, s_k)(d_{k-1}, s_{k-1}) \dots (d_2, s_2)(d_1, s_1)\beta$$ to the closure of the braid 
$$B''= (d_k, s_k)\dots (d_2, s_2)(d_1, s_1-1)\beta'(\sigma_{s_1+\dots + s_k}\dots \sigma_{s_1+\dots + s_k+d_1-2})$$ 
with $\beta' =  \sigma_{i_1-1}\dots \sigma_{i_a-1}$, where $B''$ has one fewer strand than $B'$, and the subscripts of $\sigma$'s in each $(d_i, s_i)$ change accordingly with the number of strands. 
\end{proposition}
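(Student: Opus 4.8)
The identity is one elementary step of the procedure that combs a Lorenz braid into T-link form, and I would prove it by a short explicit sequence of Markov moves — braid relations and conjugation, followed by a single destabilisation. Write $p=s_1+\dots+s_k$. Since $(d_1,s_1)=(d_1,s_1-1)\,(\sigma_{s_1}\sigma_{s_1+1}\cdots\sigma_{s_1+d_1-1})$, setting $C=(d_k,s_k)\cdots(d_2,s_2)(d_1,s_1-1)$ gives $B'=C\,(\sigma_{s_1}\sigma_{s_1+1}\cdots\sigma_{s_1+d_1-1})\,\beta$. Let $\tau$ be the strand that enters the displayed last factor at position $s_1$ and crosses over the $d_1$ strands it meets inside that factor. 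The plan is to carry $\tau$ to the extreme right of the braid, where it meets its neighbour in a single crossing and can be deleted by a destabilisation, and to check that what remains is exactly $B''$.

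The two hypotheses on $\beta$ — that it is a positive word and that it has no generator $\sigma_{i_j}$ with $i_j\le p$ — would be used as follows. Cyclically conjugating the closed braid so that $\beta$ sits next to the last factor and then pushing that factor across $\beta$, every generator $\sigma_{i_j}$ of $\beta$ is handled by one of three moves: it commutes freely with the factor (when $i_j$ is large enough that their supports are disjoint), it is briefly absorbed into a longer staircase $\sigma_{s_1}\cdots\sigma_{m}$ and then shed again when the factor meets the next generator, or it jumps across the factor with its index lowered by one, via the braid relation $\sigma_c(\sigma_a\cdots\sigma_b)=(\sigma_a\cdots\sigma_b)\sigma_{c-1}$ (valid for $a+1\le c\le b$). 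The index condition $i_j\ge p+1>s_1$ is exactly what makes every one of these moves available, and the net effect on $\beta$ is to lower each of its indices by one, yielding $\beta'$, while the factor re-emerges with unchanged length at the right-hand end.

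It then remains to move $\tau$'s over-crossings past the under-strands contributed by the blocks $(d_2,s_2),\dots,(d_k,s_k)$ (again by the staircase braid relations); the $p-s_1=s_2+\dots+s_k$ of those strands lying between positions $s_1$ and $p$ force the factor's index window to slide up to $[p,\,p+d_1-1]$. Now $\tau$ meets its neighbour in a single positive crossing at the far right, one positive destabilisation removes $\tau$ and drops the strand number by one, shortening the factor from $(\sigma_p\cdots\sigma_{p+d_1-1})$ to $(\sigma_p\cdots\sigma_{p+d_1-2})$, and undoing the conjugation leaves $C\,\beta'\,(\sigma_p\sigma_{p+1}\cdots\sigma_{p+d_1-2})=B''$. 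The assumption $k>1$, or else $s_1>1$, is precisely what guarantees that $C$ is not the empty braid, so that $\tau$ has strands to be slid against and the destabilisation is a genuine Markov move; the omitted case $k=1,\ s_1=1$ is the one in which $L$ is an unlink and the statement is not needed.

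The step I expect to be the real work is the index bookkeeping in the middle two stages: checking that the commutations, the absorptions and re-sheddings, and the single conjugation combine to produce exactly the shift $s_1\mapsto p$ together with the loss of exactly one generator, all crossings staying positive so that one destabilisation suffices and no spurious crossings survive. I would pin this down first on the smallest non-trivial case (for instance $\vec d_{L'}=\langle 2^2\rangle$ with a one-letter $\beta$) and then argue the general pattern by induction on the number of letters of $\beta$, or equivalently by tracking the arc $\tau$ directly in the braid diagram.
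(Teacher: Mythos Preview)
Your factorisation $(d_1,s_1)=(d_1,s_1-1)\,F$ with $F=(\sigma_{s_1}\cdots\sigma_{s_1+d_1-1})$ is the same starting point the paper uses. The plan that follows, however, has two genuine gaps.

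First, the claim that pushing $F$ rightward through $\beta$ converts $\beta$ into $\beta'=\sigma_{i_1-1}\cdots\sigma_{i_a-1}$ is not right. The relation you quote, $\sigma_c(\sigma_a\cdots\sigma_b)=(\sigma_a\cdots\sigma_b)\sigma_{c-1}$, transports a generator from the \emph{left} of the staircase to the right with its index lowered; to move $F$ to the right of $\sigma_{i_j}$ you need the inverse relation, which \emph{raises} the index. Worse, any generator of $\beta$ with index $\ge s_1+d_1+1$ commutes with $F$ outright and keeps its index. So commuting $F$ past $\beta$ cannot lower every index uniformly by one. A small test case already shows this: with $k=1$, $d_1=s_1=2$, $\beta=\sigma_3$, one has $F\beta=\sigma_2\sigma_3\sigma_3$, and there is no braid relation rewriting this as $\sigma_2\,(\sigma_2\sigma_3)=\beta' F$.

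Second, the destabilisation on the right is not available in general. The braid $B'$ lives on at least $p+d_k$ strands, and even after your proposed slide the factor sits in positions $[p,\,p+d_1]$; when $k>1$ (so $d_1<d_k$) the rightmost strand is still crossed by the block $(d_k,s_k)$, and $\beta$ may cross it too. There is no single extremal crossing to remove. Note also that a \emph{right} destabilisation would not shift any indices, so it cannot be the source of the passage $\beta\mapsto\beta'$.

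The paper's argument proceeds on the \emph{left} side instead, and is geometric rather than via a single Markov destabilisation. In the closed braid, the factor $F$ together with a closure arc and the leftmost vertical strand assemble into one over-arc running from position $d_1+1$ to position $1$; this over-arc is pushed up and shrunk by an ambient isotopy of the link, removing the leftmost strand. Deleting the leftmost strand is exactly what shifts every remaining index down by one, producing $\beta'$ and the renumbered Lorenz blocks simultaneously. The surviving piece of the over-arc is the short staircase $(\sigma_1\cdots\sigma_{d_1-1})$; this is then slid upward through the $p-1$ remaining overcrossing strands of the Lorenz part (each pass raising its indices by one) to become $(\sigma_{p}\cdots\sigma_{p+d_1-2})$, and a final conjugation around the closure drops it below $\beta'$. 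If you want an algebraic Markov-move version, the move to aim for is a left destabilisation after rearranging so that $\sigma_1$ occurs exactly once, not a right one.
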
 

\begin{proof} 
The $(d_1+1)$-th strand at the bottom of the sub-braid $(d_k, s_k) \dots (d_1, s_1-1)$
anticlockwise goes around the braid closure once to end in the $(1)$-st strand at the bottom.
Thus, the $(d_1+1)$-th strand is connected to the $(1)$-st strand by an over strand that goes one time around the braid closure as illustrated by the red strand in the second drawing of Figure~\ref{Fig:ReduceOneStrand}.  

\begin{figure}
% leftmost:
%% Creator: Inkscape 1.1.2 (b8e25be833, 2022-02-05), www.inkscape.org
%% PDF/EPS/PS + LaTeX output extension by Johan Engelen, 2010
%% Accompanies image file 'ReduceOneStrand1.pdf' (pdf, eps, ps)
%%
%% To include the image in your LaTeX document, write
%%   \input{<filename>.pdf_tex}
%%  instead of
%%   \includegraphics{<filename>.pdf}
%% To scale the image, write
%%   \def\svgwidth{<desired width>}
%%   \input{<filename>.pdf_tex}
%%  instead of
%%   \includegraphics[width=<desired width>]{<filename>.pdf}
%%
%% Images with a different path to the parent latex file can
%% be accessed with the `import' package (which may need to be
%% installed) using
%%   \usepackage{import}
%% in the preamble, and then including the image with
%%   \import{<path to file>}{<filename>.pdf_tex}
%% Alternatively, one can specify
%%   \graphicspath{{<path to file>/}}
%% 
%% For more information, please see info/svg-inkscape on CTAN:
%%   http://tug.ctan.org/tex-archive/info/svg-inkscape
%%
\begingroup%
  \makeatletter%
  \providecommand\color[2][]{%
    \errmessage{(Inkscape) Color is used for the text in Inkscape, but the package 'color.sty' is not loaded}%
    \renewcommand\color[2][]{}%
  }%
  \providecommand\transparent[1]{%
    \errmessage{(Inkscape) Transparency is used (non-zero) for the text in Inkscape, but the package 'transparent.sty' is not loaded}%
    \renewcommand\transparent[1]{}%
  }%
  \providecommand\rotatebox[2]{#2}%
  \newcommand*\fsize{\dimexpr\f@size pt\relax}%
  \newcommand*\lineheight[1]{\fontsize{\fsize}{#1\fsize}\selectfont}%
  \ifx\svgwidth\undefined%
    \setlength{\unitlength}{71.37316954bp}%
    \ifx\svgscale\undefined%
      \relax%
    \else%
      \setlength{\unitlength}{\unitlength * \real{\svgscale}}%
    \fi%
  \else%
    \setlength{\unitlength}{\svgwidth}%
  \fi%
  \global\let\svgwidth\undefined%
  \global\let\svgscale\undefined%
  \makeatother%
  \begin{picture}(1,1.18112865)%
    \lineheight{1}%
    \setlength\tabcolsep{0pt}%
    \put(0,0){\includegraphics[width=\unitlength,page=1]{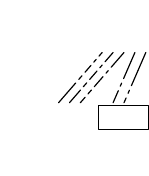}}%
    \put(0.79053309,0.34040493){\makebox(0,0)[lt]{\lineheight{1.25}\smash{\begin{tabular}[t]{l}$\beta$\end{tabular}}}}%
    \put(0,0){\includegraphics[width=\unitlength,page=2]{ReduceOneStrand1.pdf}}%
  \end{picture}%
\endgroup%

\hspace{3mm} 
% second leftmost: 
%% Creator: Inkscape 1.1.2 (b8e25be833, 2022-02-05), www.inkscape.org
%% PDF/EPS/PS + LaTeX output extension by Johan Engelen, 2010
%% Accompanies image file 'ReduceOneStrand2.pdf' (pdf, eps, ps)
%%
%% To include the image in your LaTeX document, write
%%   \input{<filename>.pdf_tex}
%%  instead of
%%   \includegraphics{<filename>.pdf}
%% To scale the image, write
%%   \def\svgwidth{<desired width>}
%%   \input{<filename>.pdf_tex}
%%  instead of
%%   \includegraphics[width=<desired width>]{<filename>.pdf}
%%
%% Images with a different path to the parent latex file can
%% be accessed with the `import' package (which may need to be
%% installed) using
%%   \usepackage{import}
%% in the preamble, and then including the image with
%%   \import{<path to file>}{<filename>.pdf_tex}
%% Alternatively, one can specify
%%   \graphicspath{{<path to file>/}}
%% 
%% For more information, please see info/svg-inkscape on CTAN:
%%   http://tug.ctan.org/tex-archive/info/svg-inkscape
%%
\begingroup%
  \makeatletter%
  \providecommand\color[2][]{%
    \errmessage{(Inkscape) Color is used for the text in Inkscape, but the package 'color.sty' is not loaded}%
    \renewcommand\color[2][]{}%
  }%
  \providecommand\transparent[1]{%
    \errmessage{(Inkscape) Transparency is used (non-zero) for the text in Inkscape, but the package 'transparent.sty' is not loaded}%
    \renewcommand\transparent[1]{}%
  }%
  \providecommand\rotatebox[2]{#2}%
  \newcommand*\fsize{\dimexpr\f@size pt\relax}%
  \newcommand*\lineheight[1]{\fontsize{\fsize}{#1\fsize}\selectfont}%
  \ifx\svgwidth\undefined%
    \setlength{\unitlength}{71.37316954bp}%
    \ifx\svgscale\undefined%
      \relax%
    \else%
      \setlength{\unitlength}{\unitlength * \real{\svgscale}}%
    \fi%
  \else%
    \setlength{\unitlength}{\svgwidth}%
  \fi%
  \global\let\svgwidth\undefined%
  \global\let\svgscale\undefined%
  \makeatother%
  \begin{picture}(1,1.18112865)%
    \lineheight{1}%
    \setlength\tabcolsep{0pt}%
    \put(0,0){\includegraphics[width=\unitlength,page=1]{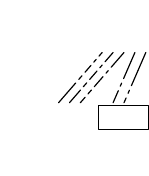}}%
    \put(0.79053309,0.34040493){\makebox(0,0)[lt]{\lineheight{1.25}\smash{\begin{tabular}[t]{l}$\beta$\end{tabular}}}}%
    \put(0,0){\includegraphics[width=\unitlength,page=2]{ReduceOneStrand2.pdf}}%
  \end{picture}%
\endgroup%

\hspace{3mm} 
% second rightmost: 
%% Creator: Inkscape 1.1.2 (b8e25be833, 2022-02-05), www.inkscape.org
%% PDF/EPS/PS + LaTeX output extension by Johan Engelen, 2010
%% Accompanies image file 'ReduceOneStrand3.pdf' (pdf, eps, ps)
%%
%% To include the image in your LaTeX document, write
%%   \input{<filename>.pdf_tex}
%%  instead of
%%   \includegraphics{<filename>.pdf}
%% To scale the image, write
%%   \def\svgwidth{<desired width>}
%%   \input{<filename>.pdf_tex}
%%  instead of
%%   \includegraphics[width=<desired width>]{<filename>.pdf}
%%
%% Images with a different path to the parent latex file can
%% be accessed with the `import' package (which may need to be
%% installed) using
%%   \usepackage{import}
%% in the preamble, and then including the image with
%%   \import{<path to file>}{<filename>.pdf_tex}
%% Alternatively, one can specify
%%   \graphicspath{{<path to file>/}}
%% 
%% For more information, please see info/svg-inkscape on CTAN:
%%   http://tug.ctan.org/tex-archive/info/svg-inkscape
%%
\begingroup%
  \makeatletter%
  \providecommand\color[2][]{%
    \errmessage{(Inkscape) Color is used for the text in Inkscape, but the package 'color.sty' is not loaded}%
    \renewcommand\color[2][]{}%
  }%
  \providecommand\transparent[1]{%
    \errmessage{(Inkscape) Transparency is used (non-zero) for the text in Inkscape, but the package 'transparent.sty' is not loaded}%
    \renewcommand\transparent[1]{}%
  }%
  \providecommand\rotatebox[2]{#2}%
  \newcommand*\fsize{\dimexpr\f@size pt\relax}%
  \newcommand*\lineheight[1]{\fontsize{\fsize}{#1\fsize}\selectfont}%
  \ifx\svgwidth\undefined%
    \setlength{\unitlength}{71.37316954bp}%
    \ifx\svgscale\undefined%
      \relax%
    \else%
      \setlength{\unitlength}{\unitlength * \real{\svgscale}}%
    \fi%
  \else%
    \setlength{\unitlength}{\svgwidth}%
  \fi%
  \global\let\svgwidth\undefined%
  \global\let\svgscale\undefined%
  \makeatother%
  \begin{picture}(1,1.18112865)%
    \lineheight{1}%
    \setlength\tabcolsep{0pt}%
    \put(0,0){\includegraphics[width=\unitlength,page=1]{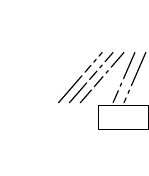}}%
    \put(0.79053309,0.34040493){\makebox(0,0)[lt]{\lineheight{1.25}\smash{\begin{tabular}[t]{l}$\beta'$\end{tabular}}}}%
    \put(0,0){\includegraphics[width=\unitlength,page=2]{ReduceOneStrand3.pdf}}%
  \end{picture}%
\endgroup%

\hspace{3mm} 
% rightmost: 
%% Creator: Inkscape 1.1.2 (b8e25be833, 2022-02-05), www.inkscape.org
%% PDF/EPS/PS + LaTeX output extension by Johan Engelen, 2010
%% Accompanies image file 'ReduceOneStrand4.pdf' (pdf, eps, ps)
%%
%% To include the image in your LaTeX document, write
%%   \input{<filename>.pdf_tex}
%%  instead of
%%   \includegraphics{<filename>.pdf}
%% To scale the image, write
%%   \def\svgwidth{<desired width>}
%%   \input{<filename>.pdf_tex}
%%  instead of
%%   \includegraphics[width=<desired width>]{<filename>.pdf}
%%
%% Images with a different path to the parent latex file can
%% be accessed with the `import' package (which may need to be
%% installed) using
%%   \usepackage{import}
%% in the preamble, and then including the image with
%%   \import{<path to file>}{<filename>.pdf_tex}
%% Alternatively, one can specify
%%   \graphicspath{{<path to file>/}}
%% 
%% For more information, please see info/svg-inkscape on CTAN:
%%   http://tug.ctan.org/tex-archive/info/svg-inkscape
%%
\begingroup%
  \makeatletter%
  \providecommand\color[2][]{%
    \errmessage{(Inkscape) Color is used for the text in Inkscape, but the package 'color.sty' is not loaded}%
    \renewcommand\color[2][]{}%
  }%
  \providecommand\transparent[1]{%
    \errmessage{(Inkscape) Transparency is used (non-zero) for the text in Inkscape, but the package 'transparent.sty' is not loaded}%
    \renewcommand\transparent[1]{}%
  }%
  \providecommand\rotatebox[2]{#2}%
  \newcommand*\fsize{\dimexpr\f@size pt\relax}%
  \newcommand*\lineheight[1]{\fontsize{\fsize}{#1\fsize}\selectfont}%
  \ifx\svgwidth\undefined%
    \setlength{\unitlength}{71.37316954bp}%
    \ifx\svgscale\undefined%
      \relax%
    \else%
      \setlength{\unitlength}{\unitlength * \real{\svgscale}}%
    \fi%
  \else%
    \setlength{\unitlength}{\svgwidth}%
  \fi%
  \global\let\svgwidth\undefined%
  \global\let\svgscale\undefined%
  \makeatother%
  \begin{picture}(1,1.18112865)%
    \lineheight{1}%
    \setlength\tabcolsep{0pt}%
    \put(0,0){\includegraphics[width=\unitlength,page=1]{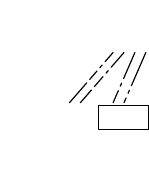}}%
    \put(0.79053309,0.34040493){\makebox(0,0)[lt]{\lineheight{1.25}\smash{\begin{tabular}[t]{l}$\beta'$\end{tabular}}}}%
    \put(0,0){\includegraphics[width=\unitlength,page=2]{ReduceOneStrand4.pdf}}%
  \end{picture}%
\endgroup%

\caption{This series of pictures (from left to right) illustrates how one strand from the closure of the leftmost braid can be reduced. By abuse of notation, the symbols $\beta$ and $\beta'$ in this figure represent the sub-braids with the rightmost $d_k$ strands of the braids $\beta$ and $\beta'$ respectively.
\label{Fig:ReduceOneStrand}}
\end{figure}  

We push this over strand up and shrink it to reduce one strand from the braid $B$, as shown in the third drawing of Figure~\ref{Fig:ReduceOneStrand}. After that, this strand becomes an over strand between the $(1)$-th and the $(d_1)$-st strand yielding the braid
$$(d_k, s_k)(d_{k-1}, s_{k-1}) \dots (d_2, s_2)(d_1, s_1-1)(\sigma_{1}\dots \sigma_{d_1-1})\beta',$$
with $\beta' =  \sigma_{i_1-1}\dots \sigma_{i_a-1}.$

Since the first $s_1+\dots + s_k-1$ strands of the last braid are still overcrossing strands with the last crossings $\sigma_{i}$ with $i\geq d_1$, we can push the sub-braid $(\sigma_{1}\dots \sigma_{d_1-1})$ up to be on the top of the braid so that we obtain the braid
$$(\sigma_{s_1+\dots + s_k}\dots \sigma_{s_1+\dots + s_k+d_1-2})(d_k, s_k)(d_{k-1}, s_{k-1}) \dots (d_2, s_2)(d_1, s_1-1)\beta'.$$

Finally we push the sub-braid $(\sigma_{s_1+\dots + s_k}\dots \sigma_{s_1+\dots + s_k+d_1-2})$ anticlockwise around the braid closure to place it below the sub-braid $\beta'$ as illustrated in the last drawing of Figure~\ref{Fig:ReduceOneStrand}. After that, we obtain the braid
$$B''= (d_k, s_k)\dots (d_2, s_2)(d_1, s_1-1)\beta'(\sigma_{s_1+\dots + s_k}\dots \sigma_{s_1+\dots + s_k+d_1-2}). \qedhere$$
\end{proof} 

Intuitively speaking, Proposition~\ref{Proposition1} is ``removing'' the leftmost overcrossing strand in the Lorenz braid by ambient-isotoping its neighbouring undercrossing strand to a horizontal strand under the sub-braid $\beta$.

\begin{lemma}\label{lemma1}
Let $\vec{d}_{L'} = \langle d_1^{s_1}\rangle$ be a Lorenz vector with $s_1\geq1$ and $\beta =  \sigma_{i_1}\dots \sigma_{i_a}$ be a braid with at least\ $s_1+ d_1$ strands and no crossings $\sigma_{i_j}$ with $i_j\leq s_1$. Then, there is an ambient isotopy that takes the closure of the braid $$B'=(d_1, s_1)\beta$$ to the closure of the braid 
$$B''= \beta'(\sigma_{1}\dots \sigma_{d_1-1})^{s_1}$$
with $\beta' =  \sigma_{i_1-s_1}\dots \sigma_{i_a-s_1}$.
\end{lemma}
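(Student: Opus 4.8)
The plan is to prove Lemma~\ref{lemma1} by induction on $s_1$, with Proposition~\ref{Proposition1} doing the work in the inductive step and a single Markov destabilization handling the base case $s_1=1$. In that base case $(d_1,1)=\sigma_1\sigma_2\dots\sigma_{d_1}$ and, by hypothesis, every generator occurring in $\beta$ has index $\geq 2$, so $\sigma_1$ occurs exactly once in the braid word $B'=\sigma_1\sigma_2\dots\sigma_{d_1}\beta$. I would apply a left Markov destabilization — the ordinary destabilization performed on the first strand rather than the last, legitimate because deleting $\sigma_1$ leaves a word in $\sigma_2,\dots,\sigma_{N-1}$ — which removes the first strand and lowers every index by one, turning $B'$ into $(\sigma_1\dots\sigma_{d_1-1})\,\beta'$ with $\beta'=\sigma_{i_1-1}\dots\sigma_{i_a-1}$. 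A cyclic permutation of the word (a Markov conjugation, preserving the closure) then moves $\sigma_1\dots\sigma_{d_1-1}$ past $\beta'$ to give $B''=\beta'(\sigma_1\dots\sigma_{d_1-1})=\beta'(\sigma_1\dots\sigma_{d_1-1})^{s_1}$. Alternatively this step can be done geometrically, exactly as in the proof of Proposition~\ref{Proposition1}: the over-strand issuing from the top-left runs over everything, closes up into a loop, and is pushed around the closure and shrunk, absorbing one strand.

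For the inductive step, assume $s_1\geq 2$ and that the lemma holds with $s_1$ replaced by $s_1-1$. Since $k=1$ and $s_1>1$, Proposition~\ref{Proposition1} applies to $B'=(d_1,s_1)\beta$ and produces, with one strand fewer,
\[ B'\ \sim\ (d_1,s_1-1)\,\gamma,\qquad \gamma:=(\sigma_{i_1-1}\dots\sigma_{i_a-1})\,(\sigma_{s_1}\dots\sigma_{s_1+d_1-2}). \]
The key point is that $\gamma$ again satisfies the hypothesis of the lemma for the vector $\langle d_1^{s_1-1}\rangle$: every generator of $\gamma$ has index $\geq s_1$ (those coming from $\beta$ because all indices of $\beta$ were $\geq s_1+1$, those in the trailing chain by inspection since $d_1\geq 2$), so $\gamma$ has no generator of index $\leq s_1-1$; the strand count is still at least $(s_1-1)+d_1$; and $s_1-1\geq 1$. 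The inductive hypothesis then gives
\[ (d_1,s_1-1)\,\gamma\ \sim\ \gamma'\,(\sigma_1\dots\sigma_{d_1-1})^{s_1-1}, \]
where $\gamma'$ is $\gamma$ with every index lowered by $s_1-1$. Lowering $\sigma_{i_1-1}\dots\sigma_{i_a-1}$ by a further $s_1-1$ gives $\sigma_{i_1-s_1}\dots\sigma_{i_a-s_1}=\beta'$ (indices stay positive since $i_j\geq s_1+1$), and lowering $\sigma_{s_1}\dots\sigma_{s_1+d_1-2}$ by $s_1-1$ gives $\sigma_1\dots\sigma_{d_1-1}$, so $\gamma'=\beta'(\sigma_1\dots\sigma_{d_1-1})$. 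Substituting,
\[ B'\ \sim\ \beta'(\sigma_1\dots\sigma_{d_1-1})(\sigma_1\dots\sigma_{d_1-1})^{s_1-1}=\beta'(\sigma_1\dots\sigma_{d_1-1})^{s_1}=B'', \]
which closes the induction.

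The routine part is the arithmetic of the index shifts. The part that needs genuine care is twofold: first, verifying at each stage that neither the trailing chain picked up from Proposition~\ref{Proposition1} nor the shifted copy of $\beta$ ever reaches the first $s_1-1$ strands, since that is exactly what keeps the hypothesis of the inductive call valid; and second, pinning down the base-case destabilization, which is a ``left'' Markov move rather than the textbook one and so is best justified either by applying the flip automorphism $\sigma_i\mapsto\sigma_{N-i}$ to reduce to a standard destabilization, or by the over-strand argument of Proposition~\ref{Proposition1}. I expect the latter to be the only place where a picture is really needed.
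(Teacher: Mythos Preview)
Your proof is correct and takes essentially the same approach as the paper: both repeatedly apply Proposition~\ref{Proposition1} to strip off one overcrossing strand at a time and finish with a left destabilization plus a cyclic shift. The only difference is organisational---the paper performs all $s_1-1$ applications of Proposition~\ref{Proposition1} first and then destabilizes, while you package this as an induction on $s_1$ with the destabilization in the base case; unrolling your induction recovers the paper's argument verbatim.
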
 

\begin{proof} 
After applying $s_1-1$ times Proposition~\ref{Proposition1} to $B'$, we obtain the following braid, called $B_{s_1-1}$,
$$(d_1, 1)\beta_{s_1-1}(\sigma_{2}\dots \sigma_{d_1})^{s_1-1}$$ with $\beta_{s_1-1} =  \sigma_{i_1-(s_1-1)}\dots \sigma_{i_a-(s_1-1)}.$

The sub-braid $\beta_{s_1-1}(\sigma_{2}\dots \sigma_{d_1})^{s_1-1}$ has no crossing $\sigma_{i_j}$ with $i_j = 1$. Hence we can  trivially destabilize $B_{s_1-1}$ on its left side to yield the braid
$$(\sigma_{1}\dots \sigma_{d_1-1})\beta_{s_1}(\sigma_{1}\dots \sigma_{d_1-1})^{s_1-1}$$
with $\beta_{s_1} =  \sigma_{i_1-s_1}\dots \sigma_{i_a-s_1}.$ 
Then, we push the sub-braid $(\sigma_{1}\dots \sigma_{d_1-1})$ anticlockwise one time around the braid closure to obtain the braid $$\beta_{s_1}(\sigma_{1}\dots \sigma_{d_1-1})^{s_1}.$$
So, we set $\beta' = \beta_{s_1}$. 
\end{proof} 

\begin{proposition}\label{proposition2}
Let $\vec{d}_{L'} = \langle d_1^{s_1}, \dots, d_k^{s_k}\rangle$ be a Lorenz vector and $\beta =  \sigma_{i_1}\dots \sigma_{i_a}$ be a braid with at least $s_1+\dots + s_k + d_k$ strands and  no crossings $\sigma_{i_j}$ with $i_j\leq s_1+\dots + s_k$. Then, there is an ambient isotopy that takes the closure of the braid $$B'=(d_k, s_k)(d_{k-1}, s_{k-1}) \dots (d_1, s_1)\beta$$ to the closure of the braid 
$$(\sigma_{1}\dots \sigma_{d_1-1})^{s_1}(\sigma_{1}\dots \sigma_{d_2-1})^{s_2}\dots (\sigma_{1}\dots \sigma_{d_k-1})^{s_k}\beta'.$$
with $\beta' =  \sigma_{i_1-(s_1+\dots + s_k)}\dots \sigma_{i_a-(s_1+\dots + s_k)}$.
\end{proposition}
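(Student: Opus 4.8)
The plan is to prove the proposition by induction on the number $k$ of distinct slopes in the Lorenz vector $\vec{d}_{L'}$. As in Lemma~\ref{lemma1} and Proposition~\ref{Proposition1}, I would treat ``isotopy'' as isotopy of braid closures, so that conjugation (cyclic permutation of a braid word) and (de)stabilisation are available; for a positive braid word $\gamma=\sigma_{j_1}\dots\sigma_{j_b}$ I write $\gamma[-m]=\sigma_{j_1-m}\dots\sigma_{j_b-m}$. The base case $k=1$ is exactly Lemma~\ref{lemma1}, whose hypothesis on $\beta$ is the present one: it gives that $(d_1,s_1)\beta$ is isotopic to $\beta[-s_1]\,(\sigma_1\dots\sigma_{d_1-1})^{s_1}$, and a cyclic permutation moving the factor $(\sigma_1\dots\sigma_{d_1-1})^{s_1}$ to the front yields the asserted form $(\sigma_1\dots\sigma_{d_1-1})^{s_1}\beta[-s_1]$.

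For the inductive step I would take $k\ge 2$, set $U=s_2+\dots+s_k$, and first dissolve the innermost block $(d_1,s_1)$ by applying Proposition~\ref{Proposition1} exactly $s_1$ times in succession. Each application is legitimate: after $j$ of them the current Lorenz vector is $\langle d_1^{\,s_1-j},d_2^{s_2},\dots,d_k^{s_k}\rangle$, which still has at least two distinct slopes (so Proposition~\ref{Proposition1} applies even when $s_1-j\in\{0,1\}$), the strand number has dropped by $j$, and the braid in the role of $\beta$ --- namely $\beta[-j]$ followed by $j$ accumulated copies of an elementary half-twist block --- has no crossing $\sigma_i$ with $i\le (s_1-j)+U$, which is precisely the current value of $s_1+\dots+s_k$ and exactly what Proposition~\ref{Proposition1} requires. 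Tracking the accumulated tail as in the proof of Lemma~\ref{lemma1}, after the $s_1$-th application the block $(d_1,s_1)$ is gone and $B'$ has become
$$(d_k,s_k)\dots(d_2,s_2)\,\Gamma,\qquad \Gamma=\beta[-s_1]\,\bigl(\sigma_{U+1}\dots\sigma_{U+d_1-1}\bigr)^{s_1},$$
where $(d_k,s_k)\dots(d_2,s_2)$ now denotes the blocks of the Lorenz vector $\langle d_2^{s_2},\dots,d_k^{s_k}\rangle$, the reindexing being forced by the $s_1$ strand reductions.

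To finish, I would observe that $\Gamma$ is a braid on $U+d_k$ strands with no crossing $\sigma_i$ for $i\le U$, so the inductive hypothesis applies to $(d_k,s_k)\dots(d_2,s_2)\Gamma$ with the $(k-1)$-slope Lorenz vector $\langle d_2^{s_2},\dots,d_k^{s_k}\rangle$ and converts it into $(\sigma_1\dots\sigma_{d_2-1})^{s_2}\dots(\sigma_1\dots\sigma_{d_k-1})^{s_k}\,\Gamma[-U]$, where $\Gamma[-U]=\beta[-(s_1+U)]\,(\sigma_1\dots\sigma_{d_1-1})^{s_1}$. Since $s_1+U=s_1+\dots+s_k$, a last cyclic permutation moving $(\sigma_1\dots\sigma_{d_1-1})^{s_1}$ to the front produces $(\sigma_1\dots\sigma_{d_1-1})^{s_1}(\sigma_1\dots\sigma_{d_2-1})^{s_2}\dots(\sigma_1\dots\sigma_{d_k-1})^{s_k}\beta'$ with $\beta'=\sigma_{i_1-(s_1+\dots+s_k)}\dots\sigma_{i_a-(s_1+\dots+s_k)}$, which is the claimed braid, completing the induction.

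I expect the main obstacle to be the index bookkeeping in the inductive step: verifying that the hypothesis of Proposition~\ref{Proposition1} is restored after each of the $s_1$ applications (the accumulating half-twist blocks must never reach a strand of index at most the current $s_1+\dots+s_k$), and pinning down the exact starting index $U+1$ of the tail of $\Gamma$, so that after the reindexing built into the inductive hypothesis the tail occupies the leftmost $d_1$ strands and $\beta$ is shifted by precisely $s_1+\dots+s_k$. Everything else should be a formal manipulation using conjugations, destabilisations, Lemma~\ref{lemma1}, and Proposition~\ref{Proposition1}.
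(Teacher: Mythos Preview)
Your proof is correct and follows essentially the same approach as the paper: both repeatedly apply Proposition~\ref{Proposition1} to peel off overcrossing strands while tracking the accumulated half-twist tail, then use Lemma~\ref{lemma1} and a final cyclic permutation. The only difference is organisational---you set it up as an induction on $k$, removing one block $(d_1,s_1)$ at a time, whereas the paper proceeds directly by applying Proposition~\ref{Proposition1} a total of $s_1+\dots+s_{k-1}$ times to strip off all but the last block $(d_k,s_k)$ before invoking Lemma~\ref{lemma1} once; unwinding your induction yields exactly the paper's sequence of moves.
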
 

\begin{proof}
If $k = 1$, then $B'$ is equivalent to the braid $B''= \beta'(\sigma_{1}\dots \sigma_{d_1-1})^{s_1}$
with $\beta' =  \sigma_{i_1-s_1}\dots \sigma_{i_a-s_1}$ by Lemma~\ref{lemma1}. We push the sub-braid $(\sigma_{1}\dots \sigma_{d_1-1})^{s_1}$ clockwise one time around the braid closure to obtain the braid $$(\sigma_{1}\dots \sigma_{d_1-1})^{s_1}\beta'.$$

Consider then that $k>1$. We apply $s_1+\dots + s_{k-1}$ times Proposition~\ref{Proposition1} to $B'$ to obtain the following braid
\begin{align*}
& B_{s_1+\dots + s_{k-1}} = (d_k, s_k)\beta_{s_1+\dots + s_{k-1}}(\sigma_{s_k+1}\dots \sigma_{s_k+d_1-1})^{s_1} \\
& (\sigma_{s_k+1}\dots \sigma_{s_k+d_2-1})^{s_2}\dots (\sigma_{s_k+1}\dots \sigma_{s_k+d_{k-1}-1})^{s_{k-1}},
\end{align*}
with $\beta_{s_1+\dots + s_{k-1}} =  \sigma_{i_1-(s_1+\dots + s_{k-1})}\dots \sigma_{i_a-(s_1+\dots + s_{k-1})}.$
Then, we apply Lemma~\ref{lemma1} to $B_{s_1+\dots + s_{k-1}}$ to obtain the following braid 
$$\beta_{s_1+ \dots + s_k}(\sigma_{1}\dots \sigma_{d_1-1})^{s_1}(\sigma_{1}\dots \sigma_{d_2-1})^{s_2}\dots (\sigma_{1}\dots \sigma_{d_k-1})^{s_k},$$
with $\beta_{s_1+ \dots + s_k} =  \sigma_{i_1-(s_1+ \dots + s_k)}\dots \sigma_{i_a-(s_1+ \dots + s_k)}.$ 
Finally, we push the sub-braid $(\sigma_{1}\dots \sigma_{d_1-1})^{s_1}\dots (\sigma_{1}\dots \sigma_{d_k-1})^{s_k}$ clockwise one time around the braid closure to obtain the braid 
$$(\sigma_{1}\dots \sigma_{d_1-1})^{s_1}(\sigma_{1}\dots \sigma_{d_2-1})^{s_2}\dots (\sigma_{1}\dots \sigma_{d_k-1})^{s_k}\beta_{s_1+ \dots + s_k}. \qedhere $$ 
\end{proof}

For every integer $n$ we have a family of links defined as follows: 

\begin{definition}
Let $r_1, \dots, r_k, r_{k+1}, s_1, \dots, s_{k}, d$ be non-negative integers.
The $T^n$-link $$T^n((r_1,s_1), \dots, (r_k,s_k), (r_{k+1}; d))$$ is defined to be the closure of the following braid
\[(\sigma_{d+1}\sigma_{d+2}\dots\sigma_{d+r_1-1})^{s_1}\dots(\sigma_{d+1}\sigma_{d+2}\dots\sigma_{d+r_k-1})^{s_k}\Delta^{n}_{r_{k+1}, d+r_{k+1}}\]
if $k+1>1$, $2\leq r_1< \dots < r_k\leq r_{k+1}$, all $s_i>0$, and $d\geq 0$, or 
\[\Delta^{n}_{r_{k+1}, d+r_{k+1}}\]
if $k+1 = 1, r_{1}\geq 1$ and $d\geq 0$, where $T^n(1; d)$ is the unlink with $d+1$ components given by the trivial braid with $d+1$ strands.
\end{definition}

If \( n = 0 \), then \( T^0 \)-links are the same as T-links, up to a number of unknots. This occurs because Birman and Kofman in \cite{Birman-Kofman:NewTwistOnLorenzLinks} did not consider splittable links that have trivial components parallel to the left boundary component of the Lorenz template. This is harmless when one studies splittable links componentwise. However, in the present paper we keep track of these components because
our goal is to represent all links in \(S^3\). 
 
\begin{theorem}\label{Theorem1}
Lorenz-like links in the Lorenz-like template \(\mathscr{L}(0,n)\) are equivalent to \(T^n\)-links. In other words, every link that can be embedded into the Lorenz-like template \(\mathscr{L}(0,n)\) is a 
\(T^n\)-link, and every \(T^n\)-link can be embedded into the Lorenz-like template \(\mathscr{L}(0,n)\).
\end{theorem}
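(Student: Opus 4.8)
The plan is to deduce the statement almost immediately from Proposition~\ref{proposition2}, which already carries the genuine braid‑theoretic content; what remains is to match the template data to the $T^n$-link parameters. Recall the braid description obtained just above the statement: a link $L$ embedded in $\mathscr{L}(0,n)$, ignoring for the moment any trivial loops parallel to the left (untwisted) boundary, comes from a Lorenz link $L'$ with Lorenz vector $\vec{d}_{L'}=\langle d_1^{s_1},\dots,d_k^{s_k}\rangle$ together with $d\ge 0$ trivial loops parallel to the twisted (right) boundary, and it is the closure of
\[ (d_k,s_k)(d_{k-1},s_{k-1})\cdots(d_1,s_1)\,\Delta^n_{\,d_k+d,\,N}, \qquad N=s_1+\cdots+s_k+d_k+d. \]
First I would set $\beta=\Delta^n_{\,d_k+d,\,N}$ and check the hypothesis of Proposition~\ref{proposition2}: by the definition of $\Delta_{b,a}$ (and of $\Delta_{b,a}^{-1}$, used when $n<0$), this factor only involves the generators $\sigma_{s_1+\cdots+s_k+1}^{\pm1},\dots,\sigma_{N-1}^{\pm1}$, so it contains no $\sigma_{i_j}^{\pm1}$ with $i_j\le s_1+\cdots+s_k$. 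Here one should first observe that the proofs of Proposition~\ref{Proposition1}, Lemma~\ref{lemma1} and hence Proposition~\ref{proposition2} are insensitive to the signs of the crossings of $\beta$, since they only slide the block $\beta$ rigidly around the braid closure, so they apply verbatim with $\beta$ a word in the $\sigma_{i_j}^{\pm1}$.

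Applying Proposition~\ref{proposition2} then rewrites $L$ as the closure of $(\sigma_1\cdots\sigma_{d_1-1})^{s_1}\cdots(\sigma_1\cdots\sigma_{d_k-1})^{s_k}\,\beta'$, where $\beta'$ is $\beta$ with every index lowered by $s_1+\cdots+s_k$ on a braid with that many fewer strands; a direct index computation gives $\beta'=\Delta^n_{\,d_k+d,\,d_k+d}$. The resulting word is exactly the defining braid of $T^n\big((d_1,s_1),\dots,(d_k,s_k),(d_k+d;0)\big)$, using $2\le d_1<\cdots<d_k\le d_k+d$ and all $s_i>0$. Reinstating $d'\ge 0$ trivial loops parallel to the left boundary merely adjoins $d'$ split unknots, and these are absorbed by the last parameter of the $T^n$-link (whose first $d'$ strands are through‑strands), so $L=T^n\big((d_1,s_1),\dots,(d_k,s_k),(d_k+d;d')\big)$. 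For the converse I would run this backwards: given $T^n((r_1,s_1),\dots,(r_k,s_k),(r_{k+1};d'))$, build the Lorenz link $L'$ with Lorenz vector $\langle r_1^{s_1},\dots,r_k^{s_k}\rangle$, twist its right strip $n$ times, and adjoin $r_{k+1}-r_k\ge 0$ loops parallel to the twisted boundary and $d'$ loops parallel to the left boundary; the same computation identifies the resulting embedded link in $\mathscr{L}(0,n)$ with the given $T^n$-link.

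The degenerate cases must be treated separately: when $L'$ has trip number $0$ (empty Lorenz vector) or $1$ (vector $\langle d_1^1\rangle$), Proposition~\ref{proposition2} is vacuous or trivial, and $L$ is instead a torus‑type link or unlink produced purely by twisting boundary‑parallel loops, which is precisely the $k+1=1$ branch $\Delta^n_{r_{k+1},\,d+r_{k+1}}$ of the definition (with the sub‑case $r_{k+1}=1$ giving $T^n(1;d)$). I expect the main obstacle to be organisational rather than technical: keeping straight the three distinct sources of ``extra'' strands — the $d_k$ strands internal to the Lorenz braid, the boundary‑parallel loops on the twisted side (which merge into $r_{k+1}$), and the boundary‑parallel loops on the untwisted side (which become the split‑unknot parameter of the $T^n$-link) — and confirming in every case that the index shift produced by Proposition~\ref{proposition2} lands on the nose on the braid in the $T^n$-link definition, together with reading $\Delta^n$ consistently as $(\Delta^{-1})^{|n|}$ on both sides when $n<0$.
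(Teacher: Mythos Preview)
Your proposal is correct and follows essentially the same route as the paper: both apply Proposition~\ref{proposition2} to the braid $(d_k,s_k)\cdots(d_1,s_1)\Delta^n_{d_k+d,\,N}$, identify the result with the defining braid of a $T^n$-link, reinstate the left-boundary unknots as the parameter $d'$, treat the degenerate small-trip-number cases separately, and obtain the converse by reversing the procedure. You are in fact slightly more careful than the paper in explicitly noting that Proposition~\ref{Proposition1}, Lemma~\ref{lemma1} and Proposition~\ref{proposition2} apply when $\beta$ contains inverse generators (needed for $n<0$), since their stated hypotheses literally write $\beta=\sigma_{i_1}\cdots\sigma_{i_a}$.
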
  

\begin{proof}
Consider $L$ a link in the Lorenz-like template $\mathscr{L}(0,n)$. Let $L'$ be the Lorenz link associated with the link $L$.

If $L$ has a trip number equal to zero in the Lorenz-like template $\mathscr{L}(0,n)$, then $L'$ is a collection of trivial loops that are parallel to the two boundary components of the Lorenz template $\mathscr{L}(0,0)$. 
Since $L$ is obtained from $L'$ by applying $n$ half twists on the strands in the right strip of the Lorenz template $\mathscr{L}(0,0)$,
it follows that $L$ is a collection of trivial loops with $n$ half twists applied to some of them. Hence, $L$ is the $T^n$-link $T^n(r_1; d)$ with $r_1\geq 1$ and $d\geq 0$.

If the Lorenz braid of $L'$ has exactly one overcrossing strand (or equivalently, the Lorenz link $L'$ has trip number equal to one) and exactly one strand in the right strip of the template $\mathscr{L}(0,0)$, then $L$ and $L'$ are the same unlink $T^n(1; d)$ with  $d\geq 0$.

Now consider the case where there is at least one overcrossing strand and at least two strands in the right strip of the template $\mathscr{L}(0,0)$ in the Lorenz braid of $L'$. The link $L$ can be an obviously splittable link with trivial loops that are parallel to the left boundary component of the template $\mathscr{L}(0,n)$. If the link $L$ has some of these trivial loops, we ignore them for now.
Let $\vec{d}_{L'} = \langle d_1^{s_1}, \dots, d_k^{s_k}\rangle$ be the Lorenz vector associated with $L$, where \( d_1, \dots, d_k, d, s_1, \dots, s_k \) are natural numbers such that \( 2 \leq d_1 < \dots < d_k \) and \( s_i > 0 \). Furthermore, there exists a natural number $d\geq 0$ such that $L$ is the closure of the following braid with $p = s_1+\dots + s_k + d_k+d$ strands
$$(d_k, s_k)(d_{k-1}, s_{k-1}) \dots (d_2, s_2)(d_1, s_1)\Delta^{n}_{d_k+d, p}.$$
By Proposition~\ref{proposition2}, there is an ambient isotopy that takes the closure of the last braid to the closure of the braid 
$$(\sigma_{1}\dots \sigma_{d_1-1})^{s_1}\dots (\sigma_{1}\dots \sigma_{d_k-1})^{s_k}\Delta^{n}_{d_k+d, d_k+d},$$
which represents the $T^n$-link $T^n((d_1,s_1), \dots, (d_k,s_k), (d_k+d; 0))$.

If we now consider that $L$ has trivial loops that are parallel to the left boundary component of the template $\mathscr{L}(0,n)$, then there exists a number $d'\geq 0$ such that $L$ is the closure of the braid
$$(\sigma_{d'+1}\dots \sigma_{d'+d_1-1})^{s_1}\dots (\sigma_{d'+1}\dots \sigma_{d'+d_k-1})^{s_k}\Delta^{n}_{d_k+d, d_k+d+d'},$$
which represents the $T^n$-link $T^n((d_1,s_1), \dots, (d_k,s_k), (d_k+d; d'))$.
 
Similarly, if we apply the reverse procedure, we can see that every $T^n$-link can be embedded into the Lorenz-like template $\mathscr{L}(0,n)$. 
\end{proof} 

In particular, we obtain \cite[Theorem 1]{Birman-Kofman:NewTwistOnLorenzLinks} as a specific case of Theorem~\ref{Theorem1} and simplify its proof.

%\begin{corollary}\label{associated}
%%%%Consider \( d, d', d_1, \dots, d_k, s_1, \dots, s_k \) non-negative %%%integers such that \( 2 \leq d_1 < \dots < d_k \) and \( s_i > 0 \).
%%%The T-link $T((d_1,s_1), \dots, (d_k,s_k))$ is a Lorenz link %%associated with the $T^n$-link $T^n((d_1,s_1), \dots, (d_k,s_k), (d_k+d; d'))$.
%%\end{corollary} 

%\begin{proof} 
%It follows from the proof of Theorem~\ref{Theorem1}
%\end{proof}

\begin{corollary}\label{corollaryL(m, 0)}
Lorenz-like links in the Lorenz-like template $\mathscr{L}(m,0)$ are equivalent to $T^m$-links. In other words, every link that can be embedded into the Lorenz-like template $\mathscr{L}(m,0)$  is a $T^m$-link and every $T^m$-link can be embedded into the Lorenz-like template $\mathscr{L}(m,0)$ . 
\end{corollary}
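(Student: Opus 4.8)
The plan is to reduce the statement to Theorem~\ref{Theorem1} by exploiting the left--right symmetry of the Lorenz template, so this should be a short corollary rather than a fresh argument. Recall the observation recorded in the introduction that for any integers $u,v$ the templates $\mathscr{L}(u,v)$ and $\mathscr{L}(v,u)$ support the same ambient isotopy classes of links. This comes from the fact that the Lorenz template $\mathscr{L}(0,0)$ is invariant under an orientation-preserving involution of $\SS^3$ --- the rotation by $\pi$ interchanging its two strips, realised on the defining flow by $(x,y,z)\mapsto(-x,-y,z)$ --- which fixes the branch line and swaps the left and right strips. Applying this involution to $\mathscr{L}(u,v)$ produces $\mathscr{L}(v,u)$, and it leaves the sign of every half twist unchanged because it preserves the orientation of $\SS^3$. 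In particular $\mathscr{L}(m,0)$ is carried onto $\mathscr{L}(0,m)$ by a self-homeomorphism of $\SS^3$, so a link in $\SS^3$ embeds in $\mathscr{L}(m,0)$ if and only if it embeds in $\mathscr{L}(0,m)$. Combining this with Theorem~\ref{Theorem1} applied with $n=m$ --- which identifies the links embeddable in $\mathscr{L}(0,m)$ with the $T^m$-links --- gives that the links embeddable in $\mathscr{L}(m,0)$ are exactly the $T^m$-links, which is the assertion.

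The one point that genuinely needs care, and which I would spell out, is that the strip-swapping symmetry is orientation-preserving on $\SS^3$. This is what guarantees that it sends $\mathscr{L}(m,0)$ to $\mathscr{L}(0,m)$ rather than to $\mathscr{L}(0,-m)$, and that it produces each link itself rather than its mirror image; it is precisely the content of the remark in the introduction and is visible directly from the standard picture of the Lorenz template. (One also notes that an orientation-preserving homeomorphism of $\SS^3$ is isotopic to the identity, so ``embeds in $\mathscr{L}(m,0)$'' and ``embeds in $\mathscr{L}(0,m)$'' agree even as statements about ambient isotopy classes.)

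If a self-contained argument avoiding the symmetry were preferred, one could instead rerun the proof of Theorem~\ref{Theorem1} with the twist region moved to the other side: a link in $\mathscr{L}(m,0)$ is obtained from its associated Lorenz link $L'$ by applying $m$ half twists to the strands of $L'$ lying in the \emph{left} strip of $\mathscr{L}(0,0)$, hence is the closure of a Lorenz-type braid carrying a twist block $\Delta^m$ on its leftmost strands; one would then adapt Proposition~\ref{Proposition1} and Proposition~\ref{proposition2} to slide this block past the sub-braids $(d_k,s_k)\cdots(d_1,s_1)$ and recognise the result as a $T^m$-link braid. The main obstacle on this second route is purely combinatorial: unlike a twist block on the rightmost strands, a left-hand $\Delta^m$ does not commute past the sub-braids $(d_i,s_i)$ without reindexing the generators, so the bookkeeping is heavier than in the symmetry argument. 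For this reason I would carry out the reduction to Theorem~\ref{Theorem1} via the involution and relegate the direct computation to a remark at most.
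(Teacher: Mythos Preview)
Your proposal is correct and takes essentially the same approach as the paper: the paper's proof is the single sentence ``It follows from Theorem~\ref{Theorem1} since the templates $\mathscr{L}(m,0)$ and $\mathscr{L}(0,m)$ have the same family of links,'' invoking exactly the symmetry you identify. Your write-up is more detailed than the paper's --- in particular your attention to the orientation-preserving nature of the strip-swapping involution and your discussion of the alternative direct route go beyond what the paper records --- but the underlying reduction is identical.
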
  

\begin{proof} 
It follows from Theorem~\ref{Theorem1} since the templates $\mathscr{L}(m,0)$ and $\mathscr{L}(0,m)$ have the same family of links.
\end{proof}  

\begin{theorem}\label{Thm:InfiniteTripNumber}
Every T-link has infinitely many representations in each template $\mathscr{L}(0,2n)$  with $n<0$. 
\end{theorem}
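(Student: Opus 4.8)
The plan is to reduce the statement to the elementary braid identity $\Delta_{r,r}^{2}=(\sigma_1\sigma_2\cdots\sigma_{r-1})^{r}$ — a full twist on $r$ strands being a power of exactly the braid whose closure appears as the last block of a $T$-link — and then to read off the trip numbers from the machinery already in place. Write $L=T((r_1,s_1),\dots,(r_k,s_k))$, so $2\le r_1<\dots<r_k$ and all $s_i>0$. The idea is to embed $L$ into $\mathscr{L}(0,2n)$ by artificially inflating the exponent of the last block by precisely the amount that the $2n$ template half twists will cancel, so that the embedded link is still $L$ but its associated Lorenz link has trip number growing with $|n|$.

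Concretely, for each integer $n<0$ I would consider the $T^{2n}$-link
\[
L_n:=T^{2n}\big((r_1,s_1),\dots,(r_{k-1},s_{k-1}),(r_k, s_k-nr_k),(r_k;0)\big),
\]
which is legitimate since $s_k-nr_k>s_k>0$, the entries $r_1<\dots<r_k$ are those of $L$, the final pair $(r_k;0)$ satisfies $r_k\le r_k$, and $d=0$. Its defining braid is $(\sigma_1\cdots\sigma_{r_1-1})^{s_1}\cdots(\sigma_1\cdots\sigma_{r_{k-1}-1})^{s_{k-1}}(\sigma_1\cdots\sigma_{r_k-1})^{s_k-nr_k}\Delta^{2n}_{r_k,r_k}$, and since $\Delta^{2n}_{r_k,r_k}=(\sigma_1\cdots\sigma_{r_k-1})^{nr_k}$ this collapses to $(\sigma_1\cdots\sigma_{r_1-1})^{s_1}\cdots(\sigma_1\cdots\sigma_{r_k-1})^{s_k}$, whose closure is $L$; hence $L_n=L$. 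By Theorem~\ref{Theorem1}, $L_n$ is embedded in $\mathscr{L}(0,2n)$, and by Corollary~\ref{associated} the Lorenz link associated with this embedding is $T((r_1,s_1),\dots,(r_{k-1},s_{k-1}),(r_k,s_k-nr_k))$, whose Lorenz vector is $\langle r_1^{s_1},\dots,r_{k-1}^{s_{k-1}},r_k^{s_k-nr_k}\rangle$. Since the half twists along the right strip do not change the trip number, the trip number of this embedding of $L$ in $\mathscr{L}(0,2n)$ equals $s_1+\dots+s_{k-1}+(s_k-nr_k)=s_1+\dots+s_k+|n|r_k$. As $r_k\ge2$, these values are pairwise distinct for distinct $n<0$, so $L$ has infinitely many representations — one in each $\mathscr{L}(0,2n)$ with $n<0$ — realizing pairwise distinct trip numbers.

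If one instead reads the statement as asking for infinitely many such representations inside a single fixed template $\mathscr{L}(0,2m)$, I would apply the same absorption to a fresh block: for $R=r_k+1,r_k+2,\dots$ the $T^{2m}$-link $T^{2m}((r_1,s_1),\dots,(r_k,s_k),(R,-mR),(R;0))$ has defining braid that collapses to $(\sigma_1\cdots\sigma_{r_1-1})^{s_1}\cdots(\sigma_1\cdots\sigma_{r_k-1})^{s_k}$ on $R$ strands, hence represents $L$ together with $R-r_k$ split trivial loops, and its associated Lorenz link $T((r_1,s_1),\dots,(r_k,s_k),(R,-mR))$ has trip number $s_1+\dots+s_k+|m|R\to\infty$; since the paper works with links up to a finite number of unknots (cf.\ Corollary~\ref{corollary1003''}), these give infinitely many representations of $L$ in $\mathscr{L}(0,2m)$ with distinct trip numbers. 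The main obstacle — indeed essentially the only nontrivial point — is verifying the collapse in the construction: one must be certain the surplus exponent is cancelled \emph{exactly}, so that the output is genuinely $L$ (or $L$ plus trivial split loops) rather than $L$ amalgamated with extra linked, non-trivial components; this is precisely why the twisted block and the final $T$-link block are arranged to act on the same $r_k$ strands, i.e.\ with offset $d=0$ in the $T^{2n}$-link notation. Everything else is bookkeeping with Theorem~\ref{Theorem1}, Corollary~\ref{associated}, and the identity $\Delta_{r,r}^{2}=(\sigma_1\cdots\sigma_{r-1})^{r}$.
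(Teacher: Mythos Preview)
There is a genuine gap: your identification of the trip number is incorrect. The trip number of a Lorenz link with Lorenz vector $\langle d_1^{s_1},\dots,d_k^{s_k}\rangle$ is \emph{not} the number $p=s_1+\dots+s_k$ of overcrossing strands; it is the number of LR strands in the Lorenz braid, which by Franks--Williams equals the braid index of the link (for the trefoil $\langle 2^3\rangle$ one has $p=3$ but trip number $2$). In your first construction the associated Lorenz link $T((r_1,s_1),\dots,(r_k,s_k-nr_k))$ is a positive braid on $r_k$ strands containing a full twist (since $s_k-nr_k\ge r_k$), so by \cite[Corollary~2.4]{Braids} its braid index, hence its trip number, is exactly $r_k$. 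Thus every one of your embeddings $L_n$ has trip number $r_k$, independent of $n$, and you have produced no distinct trip numbers at all. This also exposes a misreading of the statement: the paper asks that, for each fixed $n<0$, the link $L$ admit infinitely many embeddings in that single template $\mathscr{L}(0,2n)$ with pairwise distinct trip numbers --- not one embedding per template.

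Your second construction reads the statement correctly and, once the trip number is computed properly (it is $R$, not $s_1+\dots+s_k+|m|R$), the values do vary with $R$. But the braids you write close up to $L$ together with $R-r_k$ split unknots, not to $L$ itself; invoking Corollary~\ref{corollary1003''} does not repair this, since the theorem concerns embeddings of $L$. What you are missing is the paper's key manoeuvre: peel off a single factor $(\sigma_1\cdots\sigma_{r_k-1})$ from the last block $(\sigma_1\cdots\sigma_{r_k-1})^{s_k}$ and extend it by Markov stabilisation to $(\sigma_1\cdots\sigma_{r'_k-1})$ for arbitrary $r'_k\ge r_k$. The closure on $r'_k$ strands is still exactly $L$, with no extra components, and after inserting the cancelling pair $(\sigma_1\cdots\sigma_{r'_k-1})^{-nr'_k}\Delta^{2n}_{r'_k,r'_k}$ one obtains the $T^{2n}$-link $T^{2n}((r_1,s_1),\dots,(r_k,s_k-1),(r'_k,1-nr'_k),(r'_k;0))$, whose associated Lorenz link carries a full twist on $r'_k$ strands and hence has trip number $r'_k$.
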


\begin{proof} 
Consider the T-link $L= T((r_1,s_1), \dots, (r_k,s_k))$. By definition, the parameters of $L$ satisfy $2\leq r_1< \dots < r_k$, $s_1, \dots, s_{k}>0$. However, we can consider that $s_k>1$ otherwise the T-link representation of $L$ can be destabilized on its right to yield the T-link $T((r_1,s_1), (r_2,s_2), \dots, (r_{k-1},s_{k-1}+1)).$ So we assume that $s_{k}>1$.

The T-link $L$ is given by the braid
\[(\sigma_{1}\dots\sigma_{r_1-1})^{s_1}(\sigma_{1}\dots\sigma_{r_2-1})^{s_2}\dots(\sigma_{1}\dots\sigma_{r_k-1})^{s_k},\]
which is equivalent to the braid
\[(\sigma_{1}\dots\sigma_{r_1-1})^{s_1}\dots(\sigma_{1}\dots\sigma_{r_k-1})^{s_k-1}(\sigma_{1}\dots\sigma_{r'_k-1})^{1}\]
or 
\[(\sigma_{1}\dots\sigma_{r_1-1})^{s_1}\dots(\sigma_{1}\dots\sigma_{r_k-1})^{s_k-1}(\sigma_{1}\dots\sigma_{r'_k-1})^{1-nr'_k}(\sigma_{1}\dots\sigma_{r'_k-1})^{nr'_k},\]
which is the same as
\[(\sigma_{1}\dots\sigma_{r_1-1})^{s_1}\dots(\sigma_{1}\dots\sigma_{r_k-1})^{s_k-1}(\sigma_{1}\dots\sigma_{r'_k-1})^{1-nr'_k}\Delta^{2n}_{r'_{k}, r'_{k}}\]
for any $r'_k\geq r_k$ and $n<0$.
The last braid represents the $T^{2n}$-link $$L'=T^{2n}((r_1,s_1), \dots, (r_k,s_k-1), (r'_k,1-nr'_k), (r'_{k}; 0)).$$

Hence the T-link \(L\) is equivalent to the \(T^{2n}\)-link \(L'\). By
Theorem~\ref{Theorem1}, the link \(L'\) is represented in the
Lorenz-like template \(\mathscr L(0,2n)\). Since \(r'_k\) can be chosen
arbitrarily large, this gives infinitely many representations of \(L\) in
the template \(\mathscr L(0,2n)\).
\end{proof} 

\begin{definition}\label{generalizedT-link}
Let $r_1, \dots, r_k, r_{k+1}, s_1, \dots, s_{k}, s_{k+1}, d$ be  integers such that $2\leq r_1< \dots < r_k < r_{k+1}$, $s_1, \dots, s_{k}>0$, $s_{k+1}\in \mathbb{Z}-\lbrace -1, 0, 1 \rbrace$, and $d\geq 0$.
The generalised T-link $$T((r_1,s_1), \dots, (r_k,s_k), (r_{k+1}, s_{k+1}), d)$$ is defined to be the closure of the following braid
\[ (\sigma_{d+1}\dots\sigma_{d+r_1-1})^{s_1}\dots(\sigma_{d+1}\dots\sigma_{d+r_k-1})^{s_k}(\sigma_{d+1}\dots\sigma_{d+r_{k+1}-1})^{s_{k+1}}.\]
\end{definition}
 
If $s_{k+1}<0$, then $$(\sigma_{d+1}\dots\sigma_{d+r_{k+1}-2}\sigma_{d+r_{k+1}-1})^{s_{k+1}} = (\sigma_{d+r_{k+1}-1}^{-1}\sigma_{d+r_{k+1}-2}^{-1}\dots\sigma_{d+1}^{-1})^{-s_{k+1}}$$ in the braid group. So, more concretely, in case $s_{k+1}<0$, then the generalised T-link $T((r_1,s_1), \dots, (r_k,s_k), (r_{k+1}, s_{k+1}), d)$ is the closure of the braid
\[ (\sigma_{d+1}\dots\sigma_{d+r_1-1})^{s_1}\dots(\sigma_{d+1}\dots\sigma_{d+r_k-1})^{s_k}(\sigma_{d+r_{k+1}-1}^{-1}\dots\sigma_{d+1}^{-1})^{-s_{k+1}}.\]

The exclusion of the cases \(s_{k+1}=\pm1\) is a normalization convention.
Indeed, if the final twisting parameter is equal to \(1\) or \(-1\), then
the corresponding braid closure admits a destabilization at the end of the
last twisting block. After applying this destabilization, one obtains an
equivalent generalised T-link representative in which either the final
twisting block has been absorbed into the preceding braid word, or the
final twisting parameter satisfies \(|s_{k+1}|>1\). Thus, up to
destabilization, no generality is lost by assuming
\(s_{k+1}\in\mathbb Z\setminus\{-1,0,1\}\). 

\begin{theorem}\label{Theorem101}
Generalised T-links $T((r_1,s_1), \dots, (r_k,s_k), (r_{k+1}, s_{k+1}), d)$ with $s_{k+1}\geqslant nr_{k+1}$ and $T^{2n}$-links are in equivalence if $n\leq 0$. If $n>0$, then generalised T-links $T((r_1,s_1), \dots, (r_k,s_k), (r_{k+1}, s_{k+1}), d)$ with $s_{k+1}\geqslant nr_{k+1}$ together with the trivial links $T((2,1), d)$ are equivalent to  $T^{2n}$-links.
\end{theorem}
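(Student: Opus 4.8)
The plan is to prove the equivalence entirely at the level of braid words, the only substantive ingredient being the classical identity
$\Delta^{2}_{b,a}=(\sigma_{a-b+1}\sigma_{a-b+2}\cdots\sigma_{a-1})^{b}$, which expresses one full positive twist on the last $b$ strands of the trivial $a$-strand braid, so that $\Delta^{2n}_{b,a}=(\sigma_{a-b+1}\cdots\sigma_{a-1})^{nb}$ for every integer $n$ (a negative power when $n<0$). This is the same identity already used in the proof of Theorem~\ref{Thm:InfiniteTripNumber}. First I would record that in the present situation $\Delta^{2n}_{r_{k+1},d+r_{k+1}}=(\sigma_{1+d}\cdots\sigma_{d+r_{k+1}-1})^{nr_{k+1}}$, and that this element sits at the very end of the defining braid words of both families, so that the splittings and mergings below require no commutation moves.

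For the inclusion ``generalised T-link with $s_{k+1}\ge nr_{k+1}$ $\Rightarrow$ $T^{2n}$-link'', take $L=T((r_1,s_1),\dots,(r_k,s_k),(r_{k+1},s_{k+1}),d)$ with $s_{k+1}\ge nr_{k+1}$ and rewrite its last factor as $(\sigma_{1+d}\cdots\sigma_{d+r_{k+1}-1})^{s_{k+1}}=(\sigma_{1+d}\cdots\sigma_{d+r_{k+1}-1})^{s_{k+1}-nr_{k+1}}(\sigma_{1+d}\cdots\sigma_{d+r_{k+1}-1})^{nr_{k+1}}$, where the first exponent is $\ge 0$ and the second factor is $\Delta^{2n}_{r_{k+1},d+r_{k+1}}$. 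If $s_{k+1}-nr_{k+1}>0$ this exhibits $L$ as $T^{2n}((r_1,s_1),\dots,(r_k,s_k),(r_{k+1},s_{k+1}-nr_{k+1}),(r_{k+1};d))$, and if $s_{k+1}-nr_{k+1}=0$ as $T^{2n}((r_1,s_1),\dots,(r_k,s_k),(r_{k+1};d))$; in either case the constraints defining a $T^{2n}$-link, namely $2\le r_1<\cdots<r_k<r_{k+1}\le r_{k+1}$, positivity of the exponents, and $d\ge0$, hold automatically. The degenerate generalised T-links $T((r_1,s_1),d)$ are handled identically with $k=0$. When $n>0$ one also observes that $T((2,1),d)$ is the $(d+1)$-component unlink, hence equals $T^{2n}(1;d)$, so the whole left-hand collection lies among the $T^{2n}$-links.

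For the reverse inclusion, start from a $T^{2n}$-link and replace $\Delta^{2n}_{r_{k+1},d+r_{k+1}}$ by $(\sigma_{1+d}\cdots\sigma_{d+r_{k+1}-1})^{nr_{k+1}}$. If $r_k<r_{k+1}$ and $n\ne0$ this is at once the generalised T-link $T((r_1,s_1),\dots,(r_k,s_k),(r_{k+1},nr_{k+1}),d)$, which satisfies $s_{k+1}=nr_{k+1}\ge nr_{k+1}$. If $r_k=r_{k+1}$ the new factor is adjacent to $(\sigma_{1+d}\cdots\sigma_{d+r_k-1})^{s_k}$ and merges with it into exponent $s_k+nr_k$, giving $T((r_1,s_1),\dots,(r_{k-1},s_{k-1}),(r_k,s_k+nr_k),d)$ for $k\ge2$ (again $s_k+nr_k\ge nr_k$) and a torus-link-type $T((r_1,s_1+nr_1),d)$ for $k=1$; the fully degenerate case $k+1=1$ gives $T((r_1,nr_1),d)$ when $r_1\ge2$ and the $(d+1)$-component unlink $T((2,1),d)$ when $r_1=1$. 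The case $n=0$ is separate: then $\Delta^{0}$ is trivial and the $T^{0}$-link is a T-link together with $d+r_{k+1}-r_k$ trivial loops, which is the generalised T-link with the parameter $d$ enlarged to $d+r_{k+1}-r_k$. One then checks that for $n\le0$ every output meets $s_{k+1}\ge nr_{k+1}$ (the exponents produced are $\ge0$ when $n=0$ and are negative multiples of the last index when $n<0$), so no extra family is needed, whereas for $n>0$ the unique output failing the inequality is the unlink $T((2,1),d)$, for which $s_{k+1}=1<2n\le nr_{k+1}$, and it is adjoined by hand in the statement. Together the two inclusions give the theorem.

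I expect the only real difficulty to be organisational rather than mathematical: pinning down precise conventions for the degenerate cases ($k=0$ in Definition~\ref{generalizedT-link}, and $k+1=1$ together with $\Delta^{0}$ in the $T^{n}$-link definition) and confirming, branch by branch, that the inequality $s_{k+1}\ge nr_{k+1}$ is matched \emph{exactly} --- loose enough to absorb $T((2,1),d)$ when $n\le0$, but tight enough that this unlink is the sole exception when $n>0$. No hard topology enters once the full-twist word identity is in hand.
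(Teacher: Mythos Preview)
Your proposal is correct and follows essentially the same route as the paper's proof: both directions rest on the full-twist identity $\Delta^{2n}_{r_{k+1},d+r_{k+1}}=(\sigma_{1+d}\cdots\sigma_{d+r_{k+1}-1})^{nr_{k+1}}$, after which the argument is the split/merge bookkeeping on the last factor together with the case analysis $r_k<r_{k+1}$ versus $r_k=r_{k+1}$ and the degenerate cases $k+1=1$, $r_1=1$. Your treatment of $n=0$ (absorbing the idle strands into an enlarged $d$) is in fact slightly more explicit than the paper's, which in the branch $r_k\neq r_{k+1}$ formally writes $s_{k+1}=nr_{k+1}=0$ without commenting on the constraint $s_{k+1}\neq 0$.
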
 

\begin{proof} 
The trivial links of the form $T((2,1), d)$ are $T^{2n}$-links of the form $T^{2n}(1; d)$. Now let $L$ be a generalised T-link $L = T((r_1,s_1), \dots, (r_{k+1}, s_{k+1}), d)$ with $s_{k+1}\geqslant nr_{k+1}$.
Hence $s_{k+1}-nr_{k+1}\geq 0$. The link $L$ is given by the braid 
\[ (\sigma_{d+1}\dots\sigma_{d+r_1-1})^{s_1}\dots(\sigma_{d+1}\dots\sigma_{d+r_k-1})^{s_k}(\sigma_{d+1}\dots\sigma_{d+r_{k+1}-1})^{s_{k+1}},\]
%if $s_{k+1}>0$ or
%\[ (\sigma_{1+d}\dots\sigma_{d+r_1-1})^{s_1}\dots(\sigma_{1+d}\dots\sigma_{d+r_k-1})^{s_k}(\sigma_{d+r_{k+1}-1}^{-1}\sigma_{d+r_{k+1}-2}^{-1}\dots\sigma_{1+d}^{-1})^{-s_{k+1}},,\]
which is equivalent to the braid
\begin{align*}
& (\sigma_{d+1}\dots\sigma_{d+r_1-1})^{s_1}\dots(\sigma_{d+1}\dots\sigma_{d+r_k-1})^{s_k} \\
& (\sigma_{d+1}\dots\sigma_{d+r_{k+1}-1})^{s_{k+1}-nr_{k+1}}\Delta^{2n}_{r_{k+1}, d+r_{k+1}},
\end{align*}
which represents the $T^{2n}$-link $$T^{2n}((r_1,s_1), \dots, (r_k,s_k), (r_{k+1}, s_{k+1}-nr_{k+1}), (r_{k+1}; d))$$ 
(it becomes $T^{2n}((r_1,s_1), \dots, (r_k,s_k), (r_{k+1}; d))$ in case $s_{k+1}-nr_{k+1}=0$).

Now let $L$ be a $T^{2n}$-link $$T^{2n}((r_1,s_1), \dots, (r_k,s_k), (r_{k+1}; d))$$ given by the braid
\[(\sigma_{d+1}\sigma_{d+2}\dots\sigma_{d+r_1-1})^{s_1}\dots(\sigma_{d+1}\sigma_{d+2}\dots\sigma_{d+r_k-1})^{s_k}\Delta^{2n}_{r_{k+1}, d+r_{k+1}},\]
which is the same as the braid
\[(\sigma_{d+1}\dots\sigma_{d+r_1-1})^{s_1}\dots(\sigma_{d+1}\dots\sigma_{d+r_k-1})^{s_k}(\sigma_{d+1}\sigma_{d+2}\dots\sigma_{d+r_{k+1}-1})^{nr_{k+1}}.\]
Consider $r_{k} = r_{k+1}$. Then, this braid represents the generalised T-link $$T((r_1,s_1), \dots, (r_{k-1},s_{k-1}), (r_k,s_k + nr_k), d)$$
if $s_k + nr_k\neq 0$, with $s_k + nr_k> nr_k$,
otherwise $$T((r_1,s_1), \dots, (r_{k-1},s_{k-1}), d),$$
with $s_{k-1}>nr_{k-1}$ since $n$ is negative in this case.
If $r_{k} \neq r_{k+1}$, the last braid represents the generalised T-link $$T((r_1,s_1), \dots, (r_k,s_k), (r_{k+1}, nr_{k+1}), d)$$
with $nr_{k+1}= nr_{k+1}.$

Finally, consider a $T^{2n}$-link of the form $T^{2n}(r_{1}; d)$. The link $L$ is given by the braid
\[\Delta^{2n}_{r_{1}, d+r_{1}}\]
with $r_{1}\geq 1$ and $d\geq 0$, where $T^{2n}(1; d)$ is the unlink with $d+1$ components. If $r_{1}=1$, then $L$ is the generalised T-link $T((2,1), d)$, where we have $1> 2n$ if $n\leq 0$. If $r_{1}>1$, then $L$ is the generalised T-link 
$$T((r_{1}, nr_{1}), d),$$ 
with $nr_{1} = nr_{1}$.

Therefore, if $n\leq 0$, then $T^{2n}$-links and generalised T-links $$T((r_1,s_1), \dots, (r_{k+1}, s_{k+1}), d)$$ with $s_{k+1}\geqslant nr_{k+1}$ are equivalent. If $n>0$, then generalised T-links $T((r_1,s_1), \dots, (r_k,s_k), (r_{k+1}, s_{k+1}), d)$ with $s_{k+1}\geqslant nr_{k+1}$ together with the trivial links $T((2,1), d)$ are equivalent to  $T^{2n}$-links.
\end{proof} 

\begin{theorem}\label{parameterization}
Generalised T-links $T((r_1,s_1), \dots, (r_k,s_k), (r_{k+1}, s_{k+1}), d)$ with $s_{k+1}\geqslant vr_{k+1}$  are in equivalence
with Lorenz-like links in the Lorenz-like template $\mathscr{L}(0,2v)$ if $v\leq 0$.
\end{theorem}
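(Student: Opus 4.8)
The plan is to derive this statement as an immediate consequence of the two preceding theorems, so the argument should be short. First I would invoke Theorem~\ref{Theorem1} with the parameter choice $n = 2v$: this identifies the family of links embeddable in the Lorenz-like template $\mathscr{L}(0,2v)$ with the family of $T^{2v}$-links. Since $v \leq 0$ we have $2v \leq 0$, so we are in the regime where the equivalence has its cleanest form.

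Next I would apply Theorem~\ref{Theorem101} with $n = v$. Because $v \leq 0$, that theorem states that the $T^{2v}$-links coincide, up to equivalence, with the generalised T-links $T((r_1,s_1), \dots, (r_k,s_k), (r_{k+1}, s_{k+1}), d)$ subject to the constraint $s_{k+1} \geqslant v r_{k+1}$ — and, crucially, with no auxiliary trivial links needing to be adjoined, in contrast to the $n > 0$ case. Composing the two equivalences, namely generalised T-links with $s_{k+1} \geqslant v r_{k+1}$ $\leftrightarrow$ $T^{2v}$-links $\leftrightarrow$ Lorenz-like links in $\mathscr{L}(0,2v)$, yields exactly the claimed statement.

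The one point that deserves a sentence of care is the bookkeeping of trivial and split components at the two ends of the chain. On the template side, the trivial loops parallel to the left boundary component correspond to the shift parameter $d$ (equivalently the index shift $\sigma_{1+d}$ in the braid word); on the generalised T-link side one should check that the trivial links $T((2,1),d)$, which for $n > 0$ had to be listed separately in Theorem~\ref{Theorem101}, are already instances of generalised T-links when $v \leq 0$, since then the defining inequality reads $1 = s_{k+1} \geqslant v r_{k+1} = 2v$, which holds. Granting this verification, I do not expect any genuine obstacle: the theorem is a corollary of Theorem~\ref{Theorem1} and Theorem~\ref{Theorem101}, the substantive work having already been carried out in Propositions~\ref{Proposition1}–\ref{proposition2} (the strand-reduction isotopies) and in the proof of Theorem~\ref{Theorem101} (matching the $\Delta^{2n}$ twist blocks with the signed exponent $s_{k+1}$).
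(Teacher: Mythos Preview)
Your proposal is correct and takes essentially the same approach as the paper, which simply records that the result follows from Theorems~\ref{Theorem1} and \ref{Theorem101}. Your identification of the parameter substitutions ($n=2v$ in Theorem~\ref{Theorem1}, $n=v$ in Theorem~\ref{Theorem101}) and the composition of the two equivalences is exactly right; the extra paragraph on the trivial links $T((2,1),d)$ is a sound sanity check but already subsumed in the $n\leq 0$ clause of Theorem~\ref{Theorem101}.
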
 

\begin{proof}
It follows from Theorems~\ref{Theorem1} and \ref{Theorem101}.
\end{proof}  

\begin{corollary}\label{corollary1002}
Let $L$ be the generalised T-link $$T((r_1,s_1), \dots, (r_k,s_k), (r_{k+1}, s_{k+1}), d)$$ with $s_{k+1}\geqslant nr_{k+1}$. Then, $L$ is the $T^{2n}$-link $$T^{2n}((r_1,s_1), \dots, (r_k,s_k), (r_{k+1}, s_{k+1}-nr_{k+1}), (r_{k+1}; d)).$$
\end{corollary}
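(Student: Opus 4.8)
The plan is to read the statement directly off the braid manipulation already carried out in the first half of the proof of Theorem~\ref{Theorem101}; indeed Corollary~\ref{corollary1002} is precisely the intermediate conclusion reached there, so the only real work is to isolate it and to check the degenerate case. One could simply cite that computation, but I will sketch it self-contained.

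First I would write down the defining braid of $L$ from Definition~\ref{generalizedT-link},
\[(\sigma_{1+d}\dots\sigma_{d+r_1-1})^{s_1}\dots(\sigma_{1+d}\dots\sigma_{d+r_k-1})^{s_k}(\sigma_{1+d}\dots\sigma_{d+r_{k+1}-1})^{s_{k+1}}.\]
Using the hypothesis $s_{k+1}\geq nr_{k+1}$, I would split the last syllable, inside the braid group, as $(\sigma_{1+d}\dots\sigma_{d+r_{k+1}-1})^{s_{k+1}-nr_{k+1}}(\sigma_{1+d}\dots\sigma_{d+r_{k+1}-1})^{nr_{k+1}}$, where the exponent $s_{k+1}-nr_{k+1}$ is now non-negative.

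The key identity is that $(\sigma_{1+d}\dots\sigma_{d+r_{k+1}-1})^{r_{k+1}}$ equals a full positive twist $\Delta^{2}_{r_{k+1},\,d+r_{k+1}}$ on the last $r_{k+1}$ strands of the trivial braid on $d+r_{k+1}$ strands: the word $(\sigma_1\cdots\sigma_{m-1})^m$ generates the centre of $B_m$ and is the full twist, and here it lives in the copy of $B_{r_{k+1}}$ acting on the last $r_{k+1}$ of the $d+r_{k+1}$ strands. Raising to the $n$-th power, $(\sigma_{1+d}\dots\sigma_{d+r_{k+1}-1})^{nr_{k+1}}=\Delta^{2n}_{r_{k+1},\,d+r_{k+1}}$. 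Substituting, the braid for $L$ becomes
\[(\sigma_{1+d}\dots\sigma_{d+r_1-1})^{s_1}\dots(\sigma_{1+d}\dots\sigma_{d+r_k-1})^{s_k}(\sigma_{1+d}\dots\sigma_{d+r_{k+1}-1})^{s_{k+1}-nr_{k+1}}\Delta^{2n}_{r_{k+1},\,d+r_{k+1}},\]
which is exactly the defining braid of $T^{2n}((r_1,s_1),\dots,(r_k,s_k),(r_{k+1},s_{k+1}-nr_{k+1}),(r_{k+1};d))$; taking closures gives the claim.

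Finally I would check that the parameter constraints of the $T^{2n}$-link are met: $2\leq r_1<\dots<r_k<r_{k+1}\leq r_{k+1}$ and $s_1,\dots,s_k>0$ carry over verbatim, and $s_{k+1}-nr_{k+1}\geq 0$ as required. The only point needing a word of care is the degenerate case $s_{k+1}-nr_{k+1}=0$: then the $(r_{k+1},s_{k+1}-nr_{k+1})$ syllable is empty and the target reads $T^{2n}((r_1,s_1),\dots,(r_k,s_k),(r_{k+1};d))$, which is consistent with the convention fixed in the definition of $T^n$-links. Everything else is a direct rewriting; the main (and only) obstacle is this bookkeeping together with the centre-of-the-braid-group identity $(\sigma_1\cdots\sigma_{m-1})^m=\Delta^2_{m,m}$, both of which are standard.
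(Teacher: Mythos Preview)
Your argument is correct and coincides with the paper's: the corollary is stated there as following from the proof of Theorem~\ref{Theorem101}, and the computation you spell out (splitting off $(\sigma_{1+d}\dots\sigma_{d+r_{k+1}-1})^{nr_{k+1}}=\Delta^{2n}_{r_{k+1},d+r_{k+1}}$ and noting the degenerate case $s_{k+1}-nr_{k+1}=0$) is verbatim the first paragraph of that proof.
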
 

\begin{proof}
It follows from the proof of Theorem~\ref{Theorem101}.
\end{proof}   

\begin{theorem}\label{theorem1003}
Fix any negative integer $n$. Then, the set of all generalised T-links $$T((r_1,s_1), \dots, (r_k,s_k), (r_{k+1}, s_{k+1}), d)$$ with $s_{k+1}\geqslant nr_{k+1}$ contains all links in $\mathbb{S}^3$.
\end{theorem}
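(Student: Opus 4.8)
The plan is to obtain this as a direct corollary of Theorem~\ref{parameterization} together with the universality of the templates $\mathscr{L}(0,v)$ for negative $v$ recorded in Proposition~\ref{Prop:UniversalTemplates}. Since $n$ is a fixed negative integer, $2n$ is also a negative integer, so Proposition~\ref{Prop:UniversalTemplates} applies to $\mathscr{L}(0,2n)$ and tells us that this Lorenz-like template is universal: it contains a representative of every ambient isotopy class of links in $\mathbb{S}^3$. Equivalently, every link in $\mathbb{S}^3$ is (ambient isotopic to) a Lorenz-like link embedded in $\mathscr{L}(0,2n)$.

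Next I would invoke Theorem~\ref{parameterization} with the choice $v=n\le 0$. It asserts that the generalised T-links $T((r_1,s_1),\dots,(r_k,s_k),(r_{k+1},s_{k+1}),d)$ subject to the constraint $s_{k+1}\geqslant n r_{k+1}$ are in equivalence with the Lorenz-like links in $\mathscr{L}(0,2n)$. Composing this with the previous paragraph, every link in $\mathbb{S}^3$ is such a generalised T-link. The reverse inclusion is immediate, since a generalised T-link is by Definition~\ref{generalizedT-link} the closure of a braid and hence a link in $\mathbb{S}^3$. Therefore the collection of generalised T-links with $s_{k+1}\geqslant n r_{k+1}$ is exactly the collection of all links in $\mathbb{S}^3$, which is the assertion.

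I do not anticipate a genuine obstacle: the statement is essentially a bookkeeping consequence of the equivalence already proved (Theorem~\ref{parameterization}, itself built from Theorems~\ref{Theorem1} and~\ref{Theorem101}) and of Ghrist's universality theorem. The one point that deserves a sentence of care is that no links are lost in the trivial and splittable cases: the unlinks and the splittable links produced by trivial loops parallel to the left boundary component of $\mathscr{L}(0,2n)$ are retained because the parameter $d\ge 0$ in Definition~\ref{generalizedT-link} is allowed to be positive, and the trip-number-zero configurations correspond to the trivial links $T((2,1),d)$, which for $n\le 0$ are already identified with $T^{2n}$-links of the form $T^{2n}(1;d)$ inside Theorems~\ref{Theorem1} and~\ref{Theorem101}. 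Once this remark is in place, the proof is just the composition of the two cited results.
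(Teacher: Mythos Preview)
Your argument is correct and matches the paper's own proof: both invoke Theorem~\ref{parameterization} with $v=n\le 0$ to identify the generalised T-links with $s_{k+1}\geqslant nr_{k+1}$ with the Lorenz-like links in $\mathscr{L}(0,2n)$, and then apply Proposition~\ref{Prop:UniversalTemplates} to conclude that these exhaust all links in $\mathbb{S}^3$. Your extra remarks on the reverse inclusion and on the trivial/splittable cases are fine but not needed, since Theorem~\ref{parameterization} already records a full equivalence.
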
 

\begin{proof} 
By Theorem~\ref{parameterization}, if we consider that $s_{k+1}\geqslant nr_{k+1}$, then the generalised T-links $T((r_1,s_1), \dots, (r_k,s_k), (r_{k+1}, s_{k+1}), d)$ form the set of all links in the  Lorenz-like template $\mathscr{L}(0,2n)$.
The Lorenz-like template $\mathscr{L}(0,2n)$ contains all links in $\mathbb{S}^3$ by Proposition~\ref{Prop:UniversalTemplates}. Therefore,  the set of all generalised T-links $T((r_1,s_1), \dots, (r_k,s_k), (r_{k+1}, s_{k+1}), d)$ with $s_{k+1}\geqslant nr_{k+1}$ contains all links in $\mathbb{S}^3$.
\end{proof} 

\begin{corollary}\label{corollary1003'}
The set of all generalised T-links is the set of all links in $\mathbb{S}^3$. 
\end{corollary} 

\begin{proof} 
This follows from Theorem~\ref{theorem1003} because, in particular, we can choose a negative integer $n$ and consider the stricter condition $s_{k+1}\geqslant nr_{k+1}$.
\end{proof}

\section{Final remarks and further directions}
\label{Sec:FinalRemarks}

Theorem~\ref{Thm:AllLinksGeneralisedTLinks} is a representation result,
not a classification theorem. It shows that every link in \(S^3\) admits
at least one representative as a generalised T-link, but it does not
address when two different sets of parameters determine isotopic links.
Thus, a natural next problem is to understand the equivalence relation on
generalised T-link parameters induced by ambient isotopy.

This question is nontrivial for the classical families contained
in the class of generalised T-links. T-links, Lorenz links, and twisted
torus links often have different braid descriptions representing the same link. In the universal setting, this non-uniqueness becomes even more
pronounced: a fixed link (up to ambient isotopy) can be contained in a Lorenz-like template in many
different ways. The generalised T-link description gives a
braid-theoretic framework in which part of this redundancy can be studied.

Another natural direction is to study classical invariants in terms of
the generalised T-link parameters. Since the family contains all links but
retains a structured braid form, one may ask how invariants such as braid
index, genus, Alexander polynomial, knot Floer homology, hyperbolic
volume, or the JSJ decomposition behave with respect to the parameters.
One may also ask to what extent known results for T-links, Lorenz links,
and twisted torus links extend to this universal setting.

Finally, the parametrisation suggests the possibility of defining new
quantities from generalised T-link representatives. Such quantities may not be link invariants as the representation is not
unique. However, by taking suitable minima or maxima over all
generalised T-link representatives of a fixed link, one may obtain new
invariants or computable refinements of existing ones.

\bibliographystyle{amsplain}  
\bibliography{biblio_MathSciNet.bib} 
\end{document}